\newtheorem{theorem}{Theorem}[section]
\newtheorem{lemma}[theorem]{Lemma}
\newtheorem{proposition}[theorem]{Proposition}
\theoremstyle{definition}
\newtheorem{definition}[theorem]{Definition}
\newtheorem{remark}[theorem]{Remark}
\numberwithin{equation}{section}
\begin{document}

\title[Schwarz-Pick lemma for  $(\alpha, \beta)$-harmonic functions in the unit disc]{Schwarz-Pick lemma for  $(\alpha, \beta)$-harmonic functions \\ in the unit disc}
\author[M. Arsenovi\'c]{Milo\v s Arsenovi\'c}
\address{Department of Mathematics\\ University of Belgrade\\
Studentski Trg 16\\
11000 Belgrade, Serbia}
\email{arsenovic@matf.bg.ac.rs}
\author[J. Gaji\'c]{Jelena Gaji\'c}
\address{Faculty of Natural Sciences and Mathematics \\ University of Banja Luka\\ Mladena Stojanovi\'ca 2\\ 78000 Banja Luka, Bosnia and Herzegovina}
\email{jelena.gajic.mm@gmail.com}

\

\date{}

\begin{abstract}
We obtain Schwarz–Pick lemma for $(\alpha, \beta)$-harmonic functions $ u $  in the disc, where $ \alpha$ and  $\beta $ are complex parameters satisfying 
$ \Re \alpha + \Re \beta>-1$ . We prove sharp estimate of $\| Du(0) \|$ for such functions in terms of $ L^p $ norm of the boundary function. Also, we give asymptotically sharp estimate of $\lVert D u(z) \rVert$. Estimates of $ \lVert Du(z)\rVert $ are extended to higher order derivatives. Our results extend earlier results for $ \alpha$-harmonic and $ T_\alpha$-harmonic functions. 
\end{abstract}

\subjclass[2020]{30C80,31A05, 31A10, 35J25}

\keywords{$(\alpha, \beta)$-harmonic functions, Schwarz lemma, Poisson integral}

\maketitle

\section{Introduction and notation}\label{Intro}

The open unit disc in the complex plane is denoted by $ \mathbb{D} $ and its boundary by $\partial \mathbb{D}=\mathbb T$. 
We normalize standard measure on $\mathbb T$, so we have $m(\mathbb T)=1$.
We will use standard first order defferential operators

$$
\partial_z = \frac{\partial}{\partial z} = \frac 12\left(\frac{\partial}{\partial x} - i \frac{\partial}{\partial y}\right)\quad \mbox{and} \quad   
\partial_{\overline z} =  \frac{\partial}{\partial \overline z} = \frac 12\left(\frac{\partial}{\partial x} + i \frac{\partial}{\partial y}\right).
$$

The derivative of a $C^1$ complex valued function $f$ on a domain $\Omega \subset \mathbb C$ at a point $z$ in $\Omega$ is
denoted by $Df(z)$. The norm of this linear map on $\mathbb R^2$ is denoted by $\Vert Df(z) \Vert$ and we have

\begin{equation}\label{normDf} 
\lVert Df(z)\rVert=\sup\{\vert Df(z)\zeta\vert: \vert \zeta\vert=1\}=\vert f_z(z)\vert +\vert f_{\overline z}(z)\vert.
\end{equation}
 
Let, for $ \alpha, \beta\in \mathbb{C}$ 
$$ 
L_{\alpha, \beta} =(1-\vert z\vert^2) \left((1-\vert z\vert^2) \frac{\partial^2 }{\partial z \partial \overline{z}}
+ \alpha z \frac{\partial }{\partial z}
+ \beta \overline{z}\frac{\partial }{\partial \overline{z}}-\alpha \beta\right)
$$
be a second order uniformly elliptic linear partial differential operator in the unit disc $\mathbb D$. These operators have been introduced and studied in higher dimensions by Geller \cite{Ge} and Ahern et al. \cite{ABC} and in the planar case in \cite{KO}. They are degenerate near the boundary of the unit disc (the unit ball in the multidimensional case, see \cite{ABC}), except for the special case $ \alpha=\beta=0$ when  $L_{\alpha, \beta} = 4^{-1} (1 - \vert z \vert^2)^2 \Delta$, where $\Delta$ is the Laplace operator with respect to $x$ and $y$ variables. 
 
A function $ u $ in $C^2( \mathbb D)$ is  said to be $ (\alpha, \beta)$-harmonic if $L_{\alpha, \beta} u=0$. The vector space of all such functions is denoted by
$h_{\alpha, \beta}(\mathbb D)$.  If $ \alpha>-1 $ the $ (0,\alpha) $-harmonic 
functions are called $ \alpha$-harmonic and if $ \alpha\in \mathbb{R} $ the $ (\frac{\alpha}{2},\frac{\alpha}{2}) $-harmonic 
functions are called $ T_\alpha$-harmonic functions. These one parameter cases  appeared in \cite{OW,MA,LRW,LC2022,LIet, KM2023hpnorm} and \cite{OP}.

Colonna \cite{Colonna} obtained Scwarz-Pick lemma for complex-valued harmonic function. More precisely, if $ f $ is a complex-valued harmonic functions from $ \mathbb{D} $ into itself, then

$$ \lVert Df(z)\rVert\leq \frac{4}{\pi}\frac{1}{1-\vert z\vert^2}, \qquad z \in \mathbb D.$$
 The following results can be interpreted as Schwarz-Pick lemma for $ (0,\alpha)$-harmonic functions.

\begin{theorem}(\cite{LIet}, Theorem 1.1.)\label{Teorem LI} Suppose that $ f \in C(\overline{\mathbb D})$ is an $ \alpha$-harmonic function in $ \mathbb{D} $ with $ \alpha>-1$,  and that 
$ \sup_{z\in \overline{\mathbb{D}}}\vert f(z)\vert\leq M, $ where $ M $ is a constant. Then for $ z\in \mathbb{D}, $ 
$$ \lVert Df(z)\rVert\leq {M(\alpha+2)}\frac{\Gamma(\alpha+1)}{\Gamma^2\left(\frac{\alpha}{2}+1\right)}\frac{1}{1-\vert z\vert}\leq {2M(\alpha+2)}\frac{\Gamma(\alpha+1)}{\Gamma^2\left(\frac{\alpha}{2}+1\right)}\frac{1}{1-\vert z\vert^2}.$$
In particular, if $ f $ maps $ \mathbb{D} $ into $ \mathbb{D}, $ then
 $$ \lVert Df(z)\rVert\leq {2(\alpha+2)}\frac{\Gamma(\alpha+1)}{\Gamma^2\left(\frac{\alpha}{2}+1\right)}\frac{1}{1-\vert z\vert^2}.$$
\end{theorem}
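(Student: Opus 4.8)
The plan is to run the standard Poisson-integral/kernel-derivative scheme, but with the kernel adapted to the \emph{asymmetric} operator $L_{0,\alpha}$. Since $f\in C(\overline{\mathbb D})$ is $\alpha$-harmonic, the solvability of the Dirichlet problem for $L_{0,\alpha}$ furnishes the representation
$$ f(z)=\int_{\mathbb T}P_\alpha(z,\zeta)\,f(\zeta)\,dm(\zeta),\qquad P_\alpha(z,e^{i\theta})=\frac{(1-\vert z\vert^2)^{\alpha+1}}{(1-ze^{-i\theta})(1-\overline z e^{i\theta})^{\alpha+1}}, $$
a kernel one checks is $\alpha$-harmonic in $z$ and reproduces constants. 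Because $f$ enters only through $P_\alpha$, differentiating under the integral and invoking \eqref{normDf} gives
$$ \lVert Df(z)\rVert=\vert f_z(z)\vert+\vert f_{\overline z}(z)\vert\le M\int_{\mathbb T}\big(\vert\partial_z P_\alpha(z,\zeta)\vert+\vert\partial_{\overline z}P_\alpha(z,\zeta)\vert\big)\,dm(\zeta). $$
The kernel satisfies $P_\alpha(e^{i\varphi}z,e^{i\varphi}\zeta)=P_\alpha(z,\zeta)$, so it suffices to bound the right-hand side for $z=r\in[0,1)$.

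The crux is the computation and comparison of the two kernel derivatives at $z=r$. A direct differentiation yields the clean factor
$$ \partial_{\overline z}P_\alpha=(\alpha+1)(1-\vert z\vert^2)^\alpha(1-ze^{-i\theta})^{-1}(1-\overline z e^{i\theta})^{-(\alpha+2)}(e^{i\theta}-z), $$
and a companion expression for $\partial_z P_\alpha$ whose numerator at $z=r$ equals $(1+\alpha r^2)e^{-i\theta}-(\alpha+1)r$. The key algebraic identity is the splitting $(1+\alpha r^2)e^{-i\theta}-(\alpha+1)r=(e^{-i\theta}-r)+\alpha r(re^{-i\theta}-1)$; combined with $\vert e^{-i\theta}-r\vert=\vert re^{-i\theta}-1\vert=\vert e^{i\theta}-r\vert=\vert 1-re^{i\theta}\vert$ it forces \emph{both} derivatives onto the same power of $\vert 1-re^{i\theta}\vert$, giving
$$ \vert\partial_z P_\alpha(r,e^{i\theta})\vert+\vert\partial_{\overline z}P_\alpha(r,e^{i\theta})\vert\le\big(\alpha+2+\vert\alpha\vert r\big)(1-r^2)^\alpha\,\vert 1-re^{i\theta}\vert^{-(\alpha+2)}. $$

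It then remains to integrate. Expanding $\vert 1-re^{i\theta}\vert^{-(\alpha+2)}=(1-re^{i\theta})^{-(\alpha/2+1)}(1-re^{-i\theta})^{-(\alpha/2+1)}$ and integrating termwise gives $\int_{\mathbb T}\vert 1-re^{i\theta}\vert^{-(\alpha+2)}\,dm={}_2F_1(\tfrac\alpha2+1,\tfrac\alpha2+1;1;r^2)$, which Euler's transformation turns into $(1-r^2)^{-(\alpha+1)}\,{}_2F_1(-\tfrac\alpha2,-\tfrac\alpha2;1;r^2)$. Hence $M^{-1}\lVert Df(r)\rVert$ is bounded by $(\alpha+2+\vert\alpha\vert r)(1-r^2)^{-1}\,{}_2F_1(-\tfrac\alpha2,-\tfrac\alpha2;1;r^2)$. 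Two elementary observations finish the argument: the inequality $\alpha+2+\vert\alpha\vert r\le(\alpha+2)(1+r)$, valid exactly because $\alpha>-1$, extracts the factor $\tfrac1{1-r}$; and the series ${}_2F_1(-\tfrac\alpha2,-\tfrac\alpha2;1;x)=\sum_k\big((-\tfrac\alpha2)_k/k!\big)^2 x^k$ has nonnegative coefficients, so it increases on $[0,1]$ and is dominated by its endpoint value $\Gamma(\alpha+1)/\Gamma^2(\tfrac\alpha2+1)$ supplied by Gauss's formula. This yields the first claimed bound, and the second follows from $1+r\le 2$.

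The step I expect to be the main obstacle is obtaining the sharp-looking coefficient $(\alpha+2)$ rather than a larger constant. This depends entirely on the precise decomposition of the $\partial_z P_\alpha$ numerator so that $\vert\partial_z P_\alpha\vert$ and $\vert\partial_{\overline z}P_\alpha\vert$ collapse onto the identical weight $\vert 1-re^{i\theta}\vert^{-(\alpha+2)}$ (any coarser majorization, e.g.\ replacing $1+\alpha r$ by $1+\alpha$, overshoots), and on the fact that the leftover linear factor $\alpha+2+\vert\alpha\vert r$ is controlled by $(\alpha+2)(1+r)$ precisely on the admissible range $\alpha>-1$, degenerating at $\alpha=-1$. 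By comparison, justifying differentiation under the integral and the boundary representation for $f\in C(\overline{\mathbb D})$ is routine, since $L_{0,\alpha}$ is uniformly elliptic on compact subsets of $\mathbb D$.
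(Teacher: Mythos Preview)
Your argument is correct. Note, however, that the present paper does not prove this theorem at all: it is quoted verbatim from \cite{LIet} as a benchmark, and the paper's contribution is the \emph{sharper} estimate \eqref{Df0alpha} obtained from the general Theorem~\ref{Dfzp}. What is worth observing is that your proof is, step for step, the specialization of the paper's own method to the $(0,\alpha)$ case: your algebraic splitting $(1+\alpha r^2)e^{-i\theta}-(\alpha+1)r=(e^{-i\theta}-r)+\alpha r(re^{-i\theta}-1)$ is exactly the identity \eqref{identity} with $\alpha\mapsto 0$, $\beta\mapsto\alpha$, and your hypergeometric evaluation of $\int_{\mathbb T}\vert 1-re^{i\theta}\vert^{-(\alpha+2)}\,dm$ via Euler's transformation and Gauss's summation is precisely Proposition~\ref{MA}. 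In fact your penultimate bound
\[
\lVert Df(z)\rVert\le M\,\frac{\alpha+2+\vert\alpha\vert\,\vert z\vert}{1-\vert z\vert^2}\cdot\frac{\Gamma(\alpha+1)}{\Gamma^2(\tfrac{\alpha}{2}+1)}
\]
is (after the trivial $\vert z\vert\le 1$) the paper's improved estimate \eqref{Df0alpha}; you then deliberately weaken it through $\alpha+2+\vert\alpha\vert r\le(\alpha+2)(1+r)$ only in order to reach the form stated in \cite{LIet}. So your route and the paper's route coincide, but you stop one step short of the paper's actual result.
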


\begin{lemma}(\cite{LRW,LC2022})(Lemma 3.3, Lemma 4.3)\label{dva rada}
 Assume that $\varphi\in C(\mathbb T)$ and let $ \alpha>-1$. Then

$$ \lVert DP_{0,\alpha}[\varphi](z)\rVert\leq\left\{
\begin{array}{lc}
2^{1-\alpha}\lVert \varphi\rVert_\infty(1-\vert z\vert^2)^{\alpha-1},& -1<\alpha<0,\\
(1+\alpha)2^{1+\alpha}\lVert \varphi \rVert_\infty\frac{1}{1-\vert z\vert^2},& \alpha\geq 0.
\end{array}\right.$$
\end{lemma}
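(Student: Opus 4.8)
The plan is to reduce the estimate to a bound for the $L^1(\mathbb T)$-norm of the $z$-gradient of the $\alpha$-harmonic Poisson kernel, and then to differentiate that kernel explicitly. Write $u=P_{0,\alpha}[\varphi]$, i.e. $u(z)=\int_{\mathbb T}P_{0,\alpha}(z,\zeta)\varphi(\zeta)\,dm(\zeta)$, where the $(0,\alpha)$-harmonic Poisson kernel is
$$P_{0,\alpha}(z,e^{i\theta})=c_\alpha\,\frac{(1-|z|^2)^{\alpha+1}}{(1-ze^{-i\theta})\,(1-\bar ze^{i\theta})^{\alpha+1}},$$
with $c_\alpha$ the normalizing constant (for $\alpha=0$ this is the classical Poisson kernel). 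Since $P_{0,\alpha}(\cdot,\zeta)\in C^\infty(\mathbb D)$ and $\varphi$ is bounded, one differentiates under the integral sign, so that by \eqref{normDf}
$$\lVert Du(z)\rVert\le\lVert\varphi\rVert_\infty\int_{\mathbb T}\Bigl(|\partial_zP_{0,\alpha}(z,\zeta)|+|\partial_{\bar z}P_{0,\alpha}(z,\zeta)|\Bigr)\,dm(\zeta);$$
and by the rotational symmetry $P_{0,\alpha}(e^{it}z,e^{it}\zeta)=P_{0,\alpha}(z,\zeta)$ it suffices to take $z=r\in[0,1)$.

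Carrying out the differentiation, several factors cancel and one obtains, at $z=r$,
$$|\partial_{\bar z}P_{0,\alpha}(r,e^{i\theta})|=c_\alpha(\alpha+1)\,(1-r^2)^\alpha\,|1-re^{i\theta}|^{-(\alpha+2)},\qquad |\partial_zP_{0,\alpha}(r,e^{i\theta})|=c_\alpha\,(1-r^2)^\alpha\,\frac{E(\theta)}{|1-re^{i\theta}|^{\alpha+3}},$$
with $E(\theta)=\bigl|(\alpha r^2+1)e^{-i\theta}-(\alpha+1)r\bigr|$. The crucial point is the algebraic identity for $E(\theta)^2$: expanding the modulus and substituting $2r\cos\theta=1+r^2-|1-re^{i\theta}|^2$ gives
$$E(\theta)^2=(\alpha+1)(\alpha r^2+1)\,|1-re^{i\theta}|^2-\alpha\,(1-r^2)^2 .$$
This is what lets one pull a factor $|1-re^{i\theta}|$ out of the $\partial_z$-term and so lower its effective power from $\alpha+3$ to $\alpha+2$.

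Now separate the two regimes. If $\alpha\ge0$, the subtracted term is $\le0$, so $E(\theta)\le\sqrt{(\alpha+1)(\alpha r^2+1)}\,|1-re^{i\theta}|\le(\alpha+1)|1-re^{i\theta}|$, whence both derivatives are dominated by $c_\alpha(\alpha+1)(1-r^2)^\alpha|1-re^{i\theta}|^{-(\alpha+2)}$. Integrating over $\mathbb T$ and using the classical evaluation $\int_{\mathbb T}|1-re^{i\theta}|^{-\lambda}\,dm(\zeta)={}_2F_1(\tfrac{\lambda}{2},\tfrac{\lambda}{2};1;r^2)$, Euler's transformation, and the Gauss value at $1$, one gets $\int_{\mathbb T}|1-re^{i\theta}|^{-(\alpha+2)}\,dm(\zeta)\le \frac{\Gamma(\alpha+1)}{\Gamma(\alpha/2+1)^2}\,(1-r^2)^{-(\alpha+1)}$; collecting the constants — the $\Gamma$-quotient cancelling $c_\alpha$, or else being $\le2^\alpha$ by the Legendre duplication formula — yields the bound $(1+\alpha)2^{1+\alpha}\lVert\varphi\rVert_\infty(1-r^2)^{-1}$. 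If $-1<\alpha<0$, now $(\alpha+1)(\alpha r^2+1)<1$ and the identity gives instead $E(\theta)\le|1-re^{i\theta}|+\sqrt{-\alpha}\,(1-r^2)$, so the whole integral is controlled by multiples of $\int_{\mathbb T}|1-re^{i\theta}|^{-(\alpha+2)}\,dm(\zeta)$ and $(1-r^2)\int_{\mathbb T}|1-re^{i\theta}|^{-(\alpha+3)}\,dm(\zeta)$; using $|1-re^{i\theta}|\le2$ to replace $|1-re^{i\theta}|^{-(\alpha+2)}$ by $2^{-\alpha}|1-re^{i\theta}|^{-2}$, the lower bound $|1-re^{i\theta}|\ge1-r$ to tame the $|1-re^{i\theta}|^{-(\alpha+3)}$ term, and $\int_{\mathbb T}|1-re^{i\theta}|^{-2}\,dm(\zeta)=(1-r^2)^{-1}$, one reaches a bound of the shape $C\,(1-r^2)^\alpha(1-r^2)^{-1}=C\,(1-r^2)^{\alpha-1}$ and checks $C=2^{1-\alpha}$.

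The principal obstacle is the $\partial_z$-term, which carries the heavier power $|1-re^{i\theta}|^{-(\alpha+3)}$: without the identity for $E(\theta)^2$ its $L^1(\mathbb T)$-norm would be only $O\bigl((1-r^2)^{-(\alpha+2)}\bigr)$, far too singular. Once that identity is in hand the rest is routine — the differentiation, the hypergeometric/Gauss bookkeeping, and the elementary inequalities $1-r\le|1-re^{i\theta}|\le2$ — the only delicate part being to extract the precise constants $(1+\alpha)2^{1+\alpha}$ and $2^{1-\alpha}$ rather than constants of the right order.
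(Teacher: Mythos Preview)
The paper does not prove this lemma at all: it is quoted from \cite{LRW,LC2022} (their Lemma~3.3 and Lemma~4.3) as background in the introduction, and is then \emph{improved} by the paper's own Theorem~\ref{Dfzp} and the discussion around \eqref{Df0alpha}--\eqref{Dfzalphainfty}. So there is no proof in the paper to compare against.

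That said, your sketch is structurally sound and in fact parallels the paper's own arguments for its sharper results: differentiate under the integral, use Lemma~\ref{partialu} to express $\partial_z u_{0,\alpha}$ and $\partial_{\bar z} u_{0,\alpha}$, exploit the algebraic cancellation (your identity for $E(\theta)^2$ is the $(0,\alpha)$-specialisation of the paper's identity~\eqref{identity}), and then feed the result into Proposition~\ref{MA}. For $\alpha\ge 0$ your constant-tracking is correct and actually reproduces the paper's sharper bound \eqref{Dfzalphainfty}, which is then relaxed to $(1+\alpha)2^{1+\alpha}$ via $B(\tfrac{\alpha}{2}+\tfrac12,\tfrac12)\le\pi$.

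For $-1<\alpha<0$, however, your final step is a genuine gap. After bounding $E(\theta)\le|1-re^{i\theta}|+\sqrt{-\alpha}\,(1-r^2)$ and handling the two resulting integrals as you describe, the constant you obtain is of the form $(\alpha+2)2^{-\alpha}+\sqrt{-\alpha}\cdot C'$ for some $C'>0$ coming from the $|1-re^{i\theta}|^{-(\alpha+3)}$ term; this does \emph{not} simplify to $2^{1-\alpha}$ (for $\alpha$ near $0^{-}$ the first summand alone is already close to $2^{1-\alpha}$, so any positive contribution from the second kills the equality). You get the right order $(1-|z|^2)^{\alpha-1}$ but not the stated constant. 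To recover exactly $2^{1-\alpha}$ one needs a tighter treatment of the $\partial_z$-term --- the original proofs in \cite{LRW,LC2022} organise the estimates differently --- so ``checks $C=2^{1-\alpha}$'' is not justified by the inequalities you have written.
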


We recall definition of the Pochhammer symbol: 

$$
(a)_0 = 1 \qquad \mbox{and} \qquad  (a)_k = a (a+1) \cdots (a+k-1)\quad \mbox{\rm for} \quad k = 1, 2, \ldots.
$$

\section{Auxiliary results}\label{ar}

The function $u_{\alpha, \beta}$, where $\alpha$ and $\beta$ are complex numbers, defined by
\begin{equation}\label{ualphabeta}
 u_{\alpha, \beta}(z)= 
 \frac{(1-\vert z\vert^2)^{\alpha+\beta+1}}{(1-z)^{\alpha+1}
(1-\overline{z} )^{\beta+1}}, \qquad \vert z \vert < 1
\end{equation} 
plays important role in the theory of $(\alpha, \beta)$-harmonic functions, see \cite{KO}.

\begin{theorem} [\cite{KO}, Theorem 1.4 and Theorem 6.4.]  \label{Uab}
Let $\alpha, \beta \in \mathbb C$. Then  $u_{\alpha, \beta} \in h_{\alpha, \beta}(\mathbb D)$ and 
\begin{equation}\label{procuab}
\vert u_{\alpha, \beta}(z)\vert \leq  e^{\frac{\pi}{2}
\vert \Im  \alpha-\Im \beta\vert}\frac{(1-\vert z\vert^2)^{\Re \alpha + \Re \beta+1}}{\vert 1-z\vert^{\Re \alpha + \Re \beta+2}}, \quad \vert z\vert<1.
\end{equation}
 Moreover, if $\Re \alpha + \Re \beta>-1$, the following estimate holds:
\begin{equation}\label{L1est}
\frac{1}{2\pi} \int_{-\pi}^\pi \vert u_{\alpha, \beta}(r e^{i\theta})\vert d\theta \leq e^{\frac{\pi}{2}
\vert \Im  \alpha-\Im \beta\vert} 
\frac{\Gamma(\Re \alpha + \Re \beta +1)}{\Gamma^2\left(\frac{\Re \alpha + \Re \beta}{2} +1\right)}, \qquad 0 \leq r < 1.
\end{equation}
\end{theorem}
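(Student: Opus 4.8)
The plan is to verify the three assertions by direct computation. For $(\alpha,\beta)$-harmonicity, write $s=1-z\bar z$, $p=1-z$, $q=1-\bar z$ and $a=\alpha+\beta+1$, so that $u_{\alpha,\beta}=s^{a}p^{-\alpha-1}q^{-\beta-1}$. I would first record the logarithmic derivatives $\partial_z u/u=-a\bar z/s+(\alpha+1)/p$ and $\partial_{\bar z}u/u=-az/s+(\beta+1)/q$, then differentiate once more using $\partial_z(z/s)=s^{-2}$, which follows from $s+z\bar z=1$. Substituting into
\[
\frac{L_{\alpha,\beta}u}{(1-|z|^2)\,u}=s\,\frac{\partial_z\partial_{\bar z}u}{u}+\alpha z\,\frac{\partial_z u}{u}+\beta\bar z\,\frac{\partial_{\bar z}u}{u}-\alpha\beta
\]
and grouping the result according to its poles $1/s$, $1/p$, $1/q$, $1/(pq)$, each group collapses via the elementary identities $(\alpha+\beta)(1-z\bar z)=(\alpha+\beta)s$, $\alpha-a=-(\beta+1)$, $\beta-a=-(\alpha+1)$, and $zq+\bar zp-s=-pq$; the pole-free part equals $-(\alpha+1)(\beta+1)$ and is cancelled exactly by the $1/(pq)$-term, so $L_{\alpha,\beta}u_{\alpha,\beta}\equiv0$. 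This step is routine but is the longest piece of bookkeeping.

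For the pointwise bound \eqref{procuab}, note that $z\in\mathbb D$ forces $\Re(1-z)>0$, so $1-z$ and $1-\bar z=\overline{1-z}$ lie in the right half-plane and the powers are well defined by the principal branch. Writing $1-z=\rho e^{i\phi}$ with $\rho=|1-z|$ and $|\phi|<\pi/2$, and using $|w^\mu|=|w|^{\Re\mu}e^{-(\arg w)\Im\mu}$, one gets $|(1-z)^{\alpha+1}(1-\bar z)^{\beta+1}|=|1-z|^{\Re\alpha+\Re\beta+2}e^{(\Im\beta-\Im\alpha)\phi}$, while $|(1-|z|^2)^{\alpha+\beta+1}|=(1-|z|^2)^{\Re\alpha+\Re\beta+1}$ since $\log(1-|z|^2)$ is real. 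Hence $|u_{\alpha,\beta}(z)|=\frac{(1-|z|^2)^{\Re\alpha+\Re\beta+1}}{|1-z|^{\Re\alpha+\Re\beta+2}}\,e^{(\Im\alpha-\Im\beta)\phi}$, and \eqref{procuab} follows from $e^{(\Im\alpha-\Im\beta)\phi}\le e^{\frac{\pi}{2}|\Im\alpha-\Im\beta|}$.

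For the $L^1$ estimate \eqref{L1est}, set $\gamma=\Re\alpha+\Re\beta$ and $c=\gamma/2+1$; by the previous paragraph it suffices to bound $I(r):=(1-r^2)^{\gamma+1}\,\frac{1}{2\pi}\int_{-\pi}^\pi|1-re^{i\theta}|^{-\gamma-2}\,d\theta$ by $\Gamma(\gamma+1)/\Gamma^2(\gamma/2+1)$. Expanding $(1-re^{i\theta})^{-c}=\sum_{k\ge0}\frac{(c)_k}{k!}r^ke^{ik\theta}$ and applying Parseval gives $\frac{1}{2\pi}\int_{-\pi}^\pi|1-re^{i\theta}|^{-2c}\,d\theta=\sum_{k\ge0}\bigl(\tfrac{(c)_k}{k!}\bigr)^2r^{2k}={}_2F_1(c,c;1;r^2)$, so that $I(r)=(1-r^2)^{2c-1}\,{}_2F_1(c,c;1;r^2)$. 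Euler's transformation turns this into $I(r)={}_2F_1(1-c,1-c;1;r^2)={}_2F_1(-\tfrac{\gamma}{2},-\tfrac{\gamma}{2};1;r^2)=\sum_{k\ge0}\bigl(\tfrac{(-\gamma/2)_k}{k!}\bigr)^2r^{2k}$, a power series with nonnegative coefficients; hence $I$ is nondecreasing on $[0,1)$ and bounded by $\lim_{r\to1^-}I(r)$. Here the hypothesis $\gamma>-1$ enters decisively: it makes $1-(-\tfrac{\gamma}{2})-(-\tfrac{\gamma}{2})=1+\gamma>0$, so Gauss's summation theorem applies and gives $\lim_{r\to1^-}I(r)={}_2F_1(-\tfrac{\gamma}{2},-\tfrac{\gamma}{2};1;1)=\frac{\Gamma(\gamma+1)}{\Gamma^2(\gamma/2+1)}$; combining with \eqref{procuab} yields \eqref{L1est}.

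The main obstacle is this last estimate: one must recognize the Poisson-type integral as a Gauss hypergeometric function, apply Euler's transformation so that the resulting series has nonnegative coefficients (which makes monotonicity in $r$ transparent), and then invoke Gauss's theorem — and it is exactly the condition $\Re\alpha+\Re\beta>-1$ that legitimizes Gauss's theorem. Alternatively, one may simply cite \cite{KO}.
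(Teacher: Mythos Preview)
The paper does not supply its own proof of this theorem: it is quoted verbatim from \cite{KO} (Theorems 1.4 and 6.4 there) and used as a black box, so there is no argument in the paper to compare against. Your proposal is a correct self-contained proof of all three assertions. The verification of $(\alpha,\beta)$-harmonicity via the logarithmic derivatives is exactly the computation underlying Lemma~\ref{partialu} (also taken from \cite{KO}); the pointwise bound is handled cleanly by writing $1-z=\rho e^{i\phi}$ with $|\phi|<\pi/2$; and your treatment of the $L^1$ bound---Parseval to identify the integral as ${}_2F_1(c,c;1;r^2)$, Euler's transformation to get a series with nonnegative coefficients, then Gauss's summation under the hypothesis $\gamma+1>0$---is correct and is in fact a proof of the relevant special case of Proposition~\ref{MA}, which the paper separately imports from \cite{MA}. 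So your argument not only fills in what the paper cites but also makes transparent why the constant $\Gamma(\gamma+1)/\Gamma^2(\gamma/2+1)$ appears and why the condition $\Re\alpha+\Re\beta>-1$ is exactly what is needed.
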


In the following we assume that  $\alpha, \beta \in \mathbb C \setminus \mathbb Z^-  = \mathbb C \setminus \{-1, -2, \ldots \}$ satisfy 
$\Re \alpha + \Re \beta > -1$. The $(\alpha,\beta)$-harmonic Poisson kernel is defined by
\begin{equation}\label{valphabeta}
P_{\alpha, \beta}(z,\zeta) = c_{\alpha, \beta} u_{\alpha, \beta}(z\overline\zeta ), \qquad z \in \mathbb D, \quad \zeta \in \mathbb{T}
\end{equation}
where a normalizing constant $c_{\alpha, \beta}$ is given by
\begin{equation}\label{coef}
 c_{\alpha, \beta}=\frac{\Gamma(\alpha+1)\Gamma(\beta+1)}{\Gamma(\alpha + \beta + 1)}.
\end{equation}

If $ \Re\alpha=\alpha=\beta>-\frac{1}{2} $   then $ P_{\alpha, \beta}$ is real and positive.

\begin{definition}\label{Poissonext}
For $f \in L^1(\mathbb T)$ the $ (\alpha,\beta)$-Poisson integral of $f$ is defined by 
\begin{equation}\label{poissonL1}
P_{\alpha, \beta}[f](z) = \int_{\mathbb T} P_{\alpha, \beta}(z, \zeta) f(\zeta) dm(\zeta), \qquad z \in \mathbb D.
\end{equation}
\end{definition}

We use the same notation $P_{\alpha, \beta}$ for the above operator and its integral kernel, this should cause no confusion.
Since $P_{\alpha, \beta}(z, \zeta)$ is  $(\alpha, \beta)$-harmonic 
with respect to $z \in \mathbb D$ for each fixed $\zeta \in \mathbb T$ it follows that 

$$ P_{\alpha, \beta} [f] \in h_{\alpha, \beta}(\mathbb D), \qquad f \in L^1(\mathbb T).$$

\begin{proposition}(\cite{MA})\label{MA}
Let $ m\in \mathbb{R} $ and $ s>\frac 12$. Then for all $ z\in \mathbb{D}$, we have 
$$ \frac{1}{2\pi}\int_0^{2\pi}\frac{(1-\vert z\vert^2)^m}{\vert 1-ze^{-i\theta}\vert^{2s}}d\theta\leq \frac{\Gamma(2s-1)}{\Gamma^2(s)}(1-\vert z\vert^2)^{m-2s+1}.$$
\end{proposition}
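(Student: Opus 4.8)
The plan is to reduce the integral to a known Euler-type beta integral by a standard change of variables. First I would write $z = \rho e^{i\varphi}$ with $\rho = |z|$; by rotation invariance (substituting $\theta \mapsto \theta + \varphi$) the integral does not depend on $\arg z$, so it suffices to treat $z = \rho \in [0,1)$. Then the integrand becomes $(1-\rho^2)^m / |1 - \rho e^{-i\theta}|^{2s}$, and I would expand the denominator as $|1-\rho e^{-i\theta}|^{2s} = (1 - 2\rho\cos\theta + \rho^2)^s$. The goal is the bound
$$
\frac{1}{2\pi}\int_0^{2\pi} \frac{d\theta}{(1 - 2\rho\cos\theta + \rho^2)^s} \leq \frac{\Gamma(2s-1)}{\Gamma^2(s)}(1-\rho^2)^{1-2s},
$$
after which multiplying by $(1-\rho^2)^m$ gives the claim.

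The key step is to recognize the left-hand side as (a multiple of) a Gauss hypergeometric value. Expanding $(1-2\rho\cos\theta+\rho^2)^{-s} = |1-\rho e^{i\theta}|^{-2s} = (1-\rho e^{i\theta})^{-s}(1-\rho e^{-i\theta})^{-s}$ and using the binomial series $(1-w)^{-s} = \sum_k \frac{(s)_k}{k!} w^k$ for both factors, integrating term by term over $\theta$ kills all cross terms except the diagonal ones, leaving
$$
\frac{1}{2\pi}\int_0^{2\pi} \frac{d\theta}{(1-2\rho\cos\theta+\rho^2)^s} = \sum_{k=0}^\infty \left(\frac{(s)_k}{k!}\right)^2 \rho^{2k} = {}_2F_1(s,s;1;\rho^2).
$$
Now I would invoke the classical quadratic/Euler transformation ${}_2F_1(s,s;1;x) = (1-x)^{1-2s}\,{}_2F_1(1-s,1-s;1;x)$, which follows from Euler's transformation ${}_2F_1(a,b;c;x) = (1-x)^{c-a-b}{}_2F_1(c-a,c-b;c;x)$. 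For $s > \tfrac12$ we have $1-s < \tfrac12$, so the coefficients $\frac{(1-s)_k}{k!}$ have absolute value bounded by those of ${}_2F_1(1-s,1-s;1;1)$ when this converges, and in general ${}_2F_1(1-s,1-s;1;x) \le {}_2F_1(|1-s|,|1-s|;1;1)$ type estimates are delicate; instead the cleaner route is to bound ${}_2F_1(1-s,1-s;1;x) \le {}_2F_1(s,s;1;0) \cdot (\text{something})$ — so more carefully, I would argue directly that $(1-x)^{2s-1}\,{}_2F_1(s,s;1;x)$ is a non-decreasing function of $x \in [0,1)$ whose limit as $x \to 1^-$ equals ${}_2F_1(1-s,1-s;1;1) = \Gamma(2s-1)/\Gamma^2(s)$ by Gauss's summation theorem ${}_2F_1(a,b;c;1) = \frac{\Gamma(c)\Gamma(c-a-b)}{\Gamma(c-a)\Gamma(c-b)}$, valid here since $\Re(c-a-b) = 2s-1 > 0$.

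The monotonicity claim is the main obstacle: one must check that the derivative of $(1-x)^{2s-1}{}_2F_1(s,s;1;x)$ in $x$ is non-negative on $[0,1)$. Using the contiguous-function relation $\frac{d}{dx}{}_2F_1(s,s;1;x) = s^2\,{}_2F_1(s+1,s+1;2;x)$ together with the identity for $\frac{d}{dx}\big[(1-x)^{2s-1}F\big]$, this reduces to a comparison of two hypergeometric series with non-negative coefficients, which can be verified coefficientwise. Alternatively — and this is probably the slickest path — I would avoid monotonicity entirely by noting that the Euler transformation gives the exact identity $(1-x)^{2s-1}{}_2F_1(s,s;1;x) = {}_2F_1(1-s,1-s;1;x)$, and then simply bound ${}_2F_1(1-s,1-s;1;x) \le {}_2F_1(1-s,1-s;1;1) = \Gamma(2s-1)/\Gamma^2(s)$ provided the coefficients $\big(\frac{(1-s)_k}{k!}\big)^2$ are non-negative; since $s>\tfrac12$ the parameter $1-s$ may be negative, but $(1-s)_k$ changes sign at most once (when $1-s \in (-k, -k+1]$ it's... ) — this needs the observation that $(1-s)_k^2 \ge 0$ always, so the series has non-negative terms and is increasing in $x$, giving the bound at $x=1$ immediately. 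That final evaluation by Gauss's theorem, with the condition $\Re(c-a-b) = 2s-1 > 0$ exactly matching the hypothesis $s > \tfrac12$, closes the proof.
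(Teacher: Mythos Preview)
Your argument is correct. The paper does not prove this proposition at all; it is quoted from \cite{MA} as a known lemma, so there is no in-paper proof to compare against.

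A brief comment on the write-up: the detour through monotonicity of $(1-x)^{2s-1}{}_2F_1(s,s;1;x)$ is unnecessary and you rightly abandon it. The clean line is exactly the one you settle on at the end: the Parseval computation gives
\[
\frac{1}{2\pi}\int_0^{2\pi}\frac{d\theta}{|1-\rho e^{-i\theta}|^{2s}}
=\sum_{k\ge 0}\frac{(s)_k^2}{(k!)^2}\rho^{2k}
={}_2F_1(s,s;1;\rho^2),
\]
Euler's transformation yields ${}_2F_1(s,s;1;x)=(1-x)^{1-2s}{}_2F_1(1-s,1-s;1;x)$, and since the coefficients of the last series are the squares $(1-s)_k^2/(k!)^2\ge 0$, the function ${}_2F_1(1-s,1-s;1;x)$ is nondecreasing on $[0,1)$ and bounded by its value at $x=1$, which Gauss's summation (valid precisely because $2s-1>0$) evaluates as $\Gamma(2s-1)/\Gamma^2(s)$. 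That is a complete proof; you could trim the exploratory paragraph before it.
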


For the following Lemma see Lemma 1.1 and Lemma 1.2 from \cite{KO}.

\begin{lemma}\label{partialu}
Let $ u_{\alpha, \beta} $ be as in \eqref{ualphabeta} for some $ \alpha, \beta\in \mathbb{C}$. Then 

$$ 
\frac{\partial u_{\alpha, \beta}}{\partial z}(z) = \left( -(\alpha+\beta+1) \frac{\overline z}{1-\vert z\vert^2} + 
\frac{\alpha + 1}{1-z}\right) u_{\alpha, \beta}(z) 
$$
and 
$$ \frac{\partial u_{\alpha, \beta}}{\partial \overline z}(z) = \left( -(\alpha+\beta+1) \frac{z}{1-\vert z\vert^2} + 
 \frac{\beta + 1}{1-\overline z} \right) u_{\alpha, \beta}(z). $$
\end{lemma}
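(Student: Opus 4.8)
The statement to be proved is Lemma~\ref{partialu}, giving closed-form expressions for the first-order partial derivatives $\partial_z u_{\alpha,\beta}$ and $\partial_{\overline z} u_{\alpha,\beta}$. The plan is to compute these directly by logarithmic differentiation, exploiting the product structure of $u_{\alpha,\beta}$ in \eqref{ualphabeta}. Since $u_{\alpha,\beta}$ never vanishes on $\mathbb D$, it suffices to differentiate $\log u_{\alpha,\beta}$ and multiply back by $u_{\alpha,\beta}$; this avoids any product rule bookkeeping across the three factors.

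First I would write, on a suitable simply connected neighborhood where branches are chosen consistently,
\[
\log u_{\alpha,\beta}(z) = (\alpha+\beta+1)\log(1-|z|^2) - (\alpha+1)\log(1-z) - (\beta+1)\log(1-\overline z).
\]
Then I apply $\partial_z$ term by term, using $\partial_z(1-|z|^2) = \partial_z(1 - z\overline z) = -\overline z$, $\partial_z(1-z) = -1$, and $\partial_z(1-\overline z) = 0$ (since $\overline z$ is anti-holomorphic). This yields
\[
\frac{\partial_z u_{\alpha,\beta}}{u_{\alpha,\beta}}(z) = -(\alpha+\beta+1)\frac{\overline z}{1-|z|^2} + \frac{\alpha+1}{1-z},
\]
which is exactly the first claimed identity after multiplying through by $u_{\alpha,\beta}(z)$. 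The formula for $\partial_{\overline z} u_{\alpha,\beta}$ follows symmetrically: $\partial_{\overline z}(1-|z|^2) = -z$, $\partial_{\overline z}(1-z)=0$, and $\partial_{\overline z}(1-\overline z) = -1$, giving the factor $-(\alpha+\beta+1)\frac{z}{1-|z|^2} + \frac{\beta+1}{1-\overline z}$. Alternatively, one can observe that $u_{\beta,\alpha}(z) = \overline{u_{\overline\alpha,\overline\beta}(\overline z)}$ when the parameters are real, or more cleanly that interchanging $z \leftrightarrow \overline z$ and $\alpha \leftrightarrow \beta$ in $u_{\alpha,\beta}$ leaves the structure invariant, so the second identity is obtained from the first by this substitution together with the conjugation relation $\overline{\partial_z g} = \partial_{\overline z}\overline g$.

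There is essentially no obstacle here; the only point requiring a word of care is the branch choice for the logarithms and the fact that $|z|^2 = z\overline z$ must be differentiated as a function of the independent formal variables $z$ and $\overline z$ in the Wirtinger calculus. Since $u_{\alpha,\beta}$ is smooth and nonvanishing on all of $\mathbb D$ (the denominators $1-z$, $1-\overline z$ are nonzero there, and $1-|z|^2>0$), the logarithmic derivative identity is valid locally and the resulting formula for $\partial_z u_{\alpha,\beta}$, being an identity between smooth functions, extends to all of $\mathbb D$. This is precisely the content of Lemma~1.1 and Lemma~1.2 in \cite{KO}, which we invoke.
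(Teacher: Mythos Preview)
Your proof is correct. The paper does not give its own proof of this lemma but simply cites Lemmas~1.1 and~1.2 of \cite{KO}; your direct logarithmic differentiation is exactly the natural computation, and nothing more is needed.
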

We need the following lemma:
\begin{lemma}\cite{KO}
Let $ u_{\alpha,\beta} $ be as in \eqref{ualphabeta} for some $ \alpha,\beta\in\mathbb{C}$. Let $ k\in \mathbb{N}$. Then
$$ \partial^k u_{\alpha,\beta}(z)=\frac{(\alpha+1)_k}{(1-z)^k}u_{\alpha,\beta}(z)+
\overline{z}g_k(z),\quad \vert z\vert<1, $$
where $ g_k(z)=\frac{(\alpha+1)_{k-1}}{(1-z)^{k-1}}g_1(z)+\partial^{k-1}g_{k-1}(z)\in C^\infty(\mathbb{D})$ for $k\geq 2 $ and $ g_1(z)=-\frac{\alpha+\beta+1}{1-\vert z\vert^2}u_{\alpha,\beta}(z), z\in \mathbb{D}$.
\end{lemma}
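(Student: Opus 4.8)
The natural approach is induction on $k$, using Lemma \ref{partialu} to pass from level $k-1$ to level $k$. For $k=1$ the first formula of Lemma \ref{partialu} already has the required shape: rewriting it as
\begin{equation*}
\partial u_{\alpha,\beta}(z)=\frac{\alpha+1}{1-z}\,u_{\alpha,\beta}(z)+\overline z\left(-\frac{\alpha+\beta+1}{1-|z|^2}\,u_{\alpha,\beta}(z)\right)=\frac{(\alpha+1)_1}{(1-z)^{1}}\,u_{\alpha,\beta}(z)+\overline z\,g_1(z),
\end{equation*}
we see that the asserted identity holds for $k=1$, and $g_1\in C^\infty(\mathbb D)$ because $u_{\alpha,\beta}$ is smooth on $\mathbb D$ (its denominator $(1-z)^{\alpha+1}(1-\overline z)^{\beta+1}$ has no zero for $|z|<1$) and $1-|z|^2>0$ there.

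Assume now that $\partial^{k-1}u_{\alpha,\beta}=\dfrac{(\alpha+1)_{k-1}}{(1-z)^{k-1}}u_{\alpha,\beta}+\overline z\,g_{k-1}$ with $g_{k-1}\in C^\infty(\mathbb D)$. Applying $\partial=\partial_z$, using the product rule and $\partial_z\overline z=0$, we obtain
\begin{equation*}
\partial^{k}u_{\alpha,\beta}=(\alpha+1)_{k-1}\left[\frac{k-1}{(1-z)^{k}}\,u_{\alpha,\beta}+\frac{1}{(1-z)^{k-1}}\,\partial u_{\alpha,\beta}\right]+\overline z\,\partial g_{k-1}.
\end{equation*}
Inserting the $k=1$ identity for $\partial u_{\alpha,\beta}$ and collecting the two contributions proportional to $u_{\alpha,\beta}$ over the common denominator $(1-z)^{k}$ produces the numerator $(\alpha+1)_{k-1}\bigl((k-1)+(\alpha+1)\bigr)=(\alpha+1)_{k-1}(\alpha+k)=(\alpha+1)_{k}$, i.e.\ the main term $\dfrac{(\alpha+1)_{k}}{(1-z)^{k}}u_{\alpha,\beta}$. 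The remaining terms all carry an explicit factor $\overline z$, so $\partial^{k}u_{\alpha,\beta}=\dfrac{(\alpha+1)_{k}}{(1-z)^{k}}u_{\alpha,\beta}+\overline z\,g_{k}$ with $g_{k}=\dfrac{(\alpha+1)_{k-1}}{(1-z)^{k-1}}g_1+\partial g_{k-1}$, which is the recursion in the statement; and $g_{k}\in C^\infty(\mathbb D)$ since $(1-z)^{-(k-1)}$ is smooth on $\mathbb D$ and $g_1,g_{k-1}$ are, by the inductive hypothesis.

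The computation itself is short. The only points needing care are the Pochhammer bookkeeping, namely the identity $(\alpha+1)_{k-1}(\alpha+k)=(\alpha+1)_{k}$, and the verification that, after substituting the $k=1$ identity, every term other than $\dfrac{(\alpha+1)_{k}}{(1-z)^{k}}u_{\alpha,\beta}$ really does carry an explicit $\overline z$ in front of it. The latter is automatic: $\partial_z$ annihilates $\overline z$, so $\partial(\overline z g_{k-1})=\overline z\,\partial g_{k-1}$, while the $\overline z$ needed to absorb the contribution of $\partial u_{\alpha,\beta}$ is precisely the one supplied by the $\overline z g_1$ summand of the $k=1$ identity. No convergence or regularity issue arises, since $1-z$ and $1-|z|^2$ are bounded away from $0$ on compact subsets of $\mathbb D$.
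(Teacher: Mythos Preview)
Your induction is the natural argument and is carried out correctly: the base case is Lemma~\ref{partialu}, the Pochhammer step $(\alpha+1)_{k-1}(\alpha+k)=(\alpha+1)_k$ is right, and the observation $\partial(\overline z\,g_{k-1})=\overline z\,\partial g_{k-1}$ is exactly what makes every remainder term carry the factor $\overline z$. The paper does not supply its own proof of this lemma; it is quoted from \cite{KO}, so there is nothing to compare your argument against in the present paper, and induction on $k$ is in any case the obvious route.

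One point you should flag rather than gloss over: the recursion you actually derive is
\[
g_k=\frac{(\alpha+1)_{k-1}}{(1-z)^{k-1}}\,g_1+\partial g_{k-1},
\]
with a single $\partial$, whereas the statement as printed has $\partial^{\,k-1}g_{k-1}$. These agree for $k=2$ but differ for $k\ge 3$, and a direct check (e.g.\ computing $\partial^3 u_{\alpha,\beta}$ from the $k=2$ identity) shows that your version is the correct one; the exponent $k-1$ on $\partial$ is a typographical slip in the paper's transcription of the \cite{KO} lemma. So when you write ``which is the recursion in the statement'' you are being slightly inaccurate: you have proved the correct recursion, which differs from the printed one. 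It would be better to note the discrepancy explicitly.
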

A generalization of this lemma is given below, see Lemma \ref{polinom}.

\begin{proposition}\label{DPfformula}
For $\varphi$ in $L^1(\mathbb T)$ and $z$ in $\mathbb D$ we have

\begin{align}\label{Dfz}
\lVert DP_{\alpha,\beta} [\varphi](z) \rVert = & \vert c_{\alpha,\beta} \vert 
\left[ \; \left\vert \int_{\mathbb T} \left( - \frac{(\alpha+\beta+1)\overline z }{1-\vert z\vert^2} + 
\frac{\alpha + 1}{1-z\overline \zeta}  \overline \zeta  \right)
u_{\alpha, \beta} (z \overline \zeta) \varphi(\zeta) dm(\zeta) \right\vert \right.         \nonumber      \\
+ & \left. \left\vert \int_{\mathbb T}
\left( -  \frac{(\alpha+\beta+1)z}{1 - \vert z\vert^2} +\frac{\beta + 1}{1- \overline z \zeta} \zeta \right)   u_{\alpha, \beta}
 (z \overline \zeta) \varphi(\zeta) dm(\zeta) \right\vert \; \right]          
\end{align}

and

\begin{equation}\label{Df0}
\lVert DP_{\alpha,\beta}[\varphi](0)\rVert = \vert c_{\alpha,\beta} \vert \left( \left\vert \int_{\mathbb T}(\alpha+1)\overline{\zeta}\varphi(\zeta)dm(\zeta)\right\vert +\left\vert \int_{\mathbb{T}}
(\beta+1){\zeta}\varphi(\zeta)dm(\zeta)\right\vert\right).
\end{equation}

\end{proposition}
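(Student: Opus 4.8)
The plan is to differentiate under the integral sign in \eqref{poissonL1}, using the explicit expression for the kernel $P_{\alpha,\beta}(z,\zeta) = c_{\alpha,\beta}\,u_{\alpha,\beta}(z\overline\zeta)$ and the formulas for $\partial_z u_{\alpha,\beta}$ and $\partial_{\overline z} u_{\alpha,\beta}$ from Lemma \ref{partialu}. First I would justify differentiation under the integral sign: for fixed $z\in\mathbb D$ the function $\zeta\mapsto u_{\alpha,\beta}(z\overline\zeta)$ and its partial derivatives in $z$ are bounded on $\mathbb T$ (indeed $C^\infty$ as functions of $z$ on compact subsets of $\mathbb D$, with bounds uniform in $\zeta\in\mathbb T$ by the estimate \eqref{procuab}), and $\varphi\in L^1(\mathbb T)$, so the standard dominated-convergence argument applies on a neighborhood of any $z\in\mathbb D$. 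Hence
$$
\partial_z P_{\alpha,\beta}[\varphi](z) = c_{\alpha,\beta}\int_{\mathbb T} \frac{\partial}{\partial z}\bigl(u_{\alpha,\beta}(z\overline\zeta)\bigr)\varphi(\zeta)\,dm(\zeta),
$$
and similarly for $\partial_{\overline z}$.

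Next I would apply the chain rule to $z\mapsto u_{\alpha,\beta}(z\overline\zeta)$. Writing $w=z\overline\zeta$, since $|\zeta|=1$ we have $\partial_z w = \overline\zeta$, $\partial_z\overline w = 0$, $\partial_{\overline z} w = 0$, $\partial_{\overline z}\overline w = \zeta$, so
$$
\frac{\partial}{\partial z}\bigl(u_{\alpha,\beta}(z\overline\zeta)\bigr) = \overline\zeta\,(\partial_z u_{\alpha,\beta})(z\overline\zeta),
\qquad
\frac{\partial}{\partial\overline z}\bigl(u_{\alpha,\beta}(z\overline\zeta)\bigr) = \zeta\,(\partial_{\overline z} u_{\alpha,\beta})(z\overline\zeta).
$$
Substituting the expressions from Lemma \ref{partialu} evaluated at $w=z\overline\zeta$ (noting $1-|w|^2 = 1-|z|^2$ and $1-w = 1-z\overline\zeta$, $1-\overline w = 1-\overline z\zeta$) gives
$$
\frac{\partial}{\partial z}\bigl(u_{\alpha,\beta}(z\overline\zeta)\bigr) = \Bigl(-\frac{(\alpha+\beta+1)\overline z}{1-|z|^2} + \frac{\alpha+1}{1-z\overline\zeta}\,\overline\zeta\Bigr)u_{\alpha,\beta}(z\overline\zeta),
$$
using that $\overline\zeta\cdot\overline{w} = \overline\zeta\cdot\overline z\,\zeta = \overline z$, and the analogous computation for $\partial_{\overline z}$ using $\zeta\cdot w = \zeta\cdot z\overline\zeta = z$. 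This yields the two integrands appearing in \eqref{Dfz}. Finally, invoking the identity \eqref{normDf}, namely $\|Df(z)\| = |f_z(z)| + |f_{\overline z}(z)|$, and pulling $|c_{\alpha,\beta}|$ out of each modulus gives \eqref{Dfz}. For \eqref{Df0} I would simply set $z=0$: the first term in each parenthesis vanishes, $u_{\alpha,\beta}(0)=1$, and $1-0\cdot\overline\zeta = 1$, which immediately reduces \eqref{Dfz} to \eqref{Df0}.

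I do not expect a serious obstacle here; the proposition is essentially a bookkeeping computation. The one point requiring genuine (if routine) care is the justification of differentiation under the integral sign, which rests on the local boundedness of the kernel and its derivatives guaranteed by Theorem \ref{Uab}; everything else is the chain rule plus the norm formula \eqref{normDf}. One should be slightly attentive to the placement of the factor $\overline\zeta$ versus $\zeta$ in the two integrands — it is $\overline\zeta$ multiplying $(\alpha+1)/(1-z\overline\zeta)$ in the $\partial_z$ term and $\zeta$ multiplying $(\beta+1)/(1-\overline z\zeta)$ in the $\partial_{\overline z}$ term — but this falls out automatically from the chain rule as above.
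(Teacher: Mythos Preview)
Your proposal is correct and follows essentially the same approach as the paper: differentiate under the integral sign, apply the chain rule with Lemma~\ref{partialu}, and invoke the norm formula \eqref{normDf}, then specialize to $z=0$ using $u_{\alpha,\beta}(0)=1$. The paper's own proof is terser (it simply cites \eqref{normDf} and ``differentiation under the integral sign'' without the dominated-convergence justification or the explicit chain-rule bookkeeping you supply), but the argument is the same.
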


\begin{proof}
Since $u_{\alpha, \beta}(0) = 1$ \eqref{Df0} follows from \eqref{Dfz}, so it suffices to prove \eqref{Dfz}.
We can use \eqref{normDf} and differentiation under the integral sign to obtain

\begin{align}
\lVert DP_{\alpha,\beta}[\varphi](z)\rVert = & \left\vert \frac{\partial}{\partial z} \int_{\mathbb T} c_{\alpha,\beta}
u_{\alpha, \beta} (z \overline \zeta) \varphi (\zeta) dm(\zeta) \right\vert + 
\left\vert \frac{\partial}{\partial \overline z} \int_{\mathbb T} c_{\alpha,\beta} u_{\alpha, \beta} (z \overline \zeta)
\varphi(\zeta) dm(\zeta) \right\vert          \nonumber   \\ 
= & \vert c_{\alpha,\beta}\vert \left( \left\vert \int_{\mathbb T} \left( -(\alpha+\beta+1) 
\frac{\overline z \zeta}{1-\vert z\vert^2} + \frac{\alpha + 1}{1-z\overline \zeta} \right) \overline \zeta 
u_{\alpha, \beta} (z \overline \zeta) \varphi(\zeta) dm(\zeta) \right\vert \right.         \nonumber      \\
+ & \left. \left\vert \int_{\mathbb T}
\left(-(\alpha+\beta+1) \frac{z \overline \zeta}{1 - \vert z\vert^2} +\frac{\beta + 1}{1-\overline{z}\zeta} \right) \zeta  u_{\alpha, \beta} (z \overline \zeta) \varphi(\zeta) dm(\zeta) \right\vert \right) ,     \nonumber
\end{align}
which suffices since $\vert \zeta \vert = 1$.
\end{proof}

For future reference we note the following identity
\begin{equation}\label{identity}
 -  \frac{(\alpha+\beta+1)\overline z }{1-\vert z\vert^2} + 
\frac{\alpha + 1}{1-z\overline \zeta}  \overline \zeta  = \frac{(\alpha + 1) \overline\zeta (1 - \overline z \zeta) - \beta \overline z (1 - \overline\zeta z)}{( 1- \vert z \vert^2)(1 - z \overline\zeta)}, \qquad \zeta \in \mathbb T, \quad z \in \mathbb D.
\end{equation}

It is convenient to use the following notation: $\lambda_1 = | \alpha + 1 |$, $\lambda_2 = | \beta + 1 |$ and

\begin{equation}\label{Ivarphi} 
I(\varphi) = \lambda_1 \left\vert \int_{\mathbb T}\overline{\zeta}\varphi(\zeta)dm(\zeta)\right\vert +\lambda_2\left\vert \int_{\mathbb{T}}
{\zeta}\varphi(\zeta)dm(\zeta)\right\vert.
\end{equation}
Note that $\lambda_1, \lambda_2 > 0$. We can rewrite \eqref{Df0} in the following form

\begin{equation}\label{Df0new}
\lVert DP_{\alpha,\beta}[\varphi](0)\rVert = \vert c_{\alpha,\beta} \vert I(\varphi), \qquad \varphi \in L^1(\mathbb T).
\end{equation}

The following elementary observation will be useful in our estimates:
\begin{equation}\label{brisati}
\max_{\vert \eta \vert =1}\vert z\overline{\eta}+w\eta \vert = \vert z\vert +\vert w \vert, \qquad z, w \in \mathbb C.
\end{equation} 

\begin{lemma}\label{Maxeta}
For every $\varphi \in L^1(\mathbb T)$ we have
\begin{equation}\label{maxeta}
I(\varphi) = \frac{1}{2\pi} \max_{\vert \eta \vert = 1}
\left\vert \int_{-\pi}^{\pi} e^{-it} \varphi (e^{it}) \left( \lambda_1  + \lambda_2 \frac{\overline \eta}{\eta} e^{2it}\right) dt \right\vert .
\end{equation}
\end{lemma}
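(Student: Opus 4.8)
The plan is to reduce the identity \eqref{maxeta} to the elementary fact \eqref{brisati}, after expanding the right-hand side by linearity of the integral. First I would fix the notation
\[
A = \int_{\mathbb T}\overline\zeta\,\varphi(\zeta)\,dm(\zeta), \qquad B = \int_{\mathbb T}\zeta\,\varphi(\zeta)\,dm(\zeta),
\]
both of which are well defined since $\varphi\in L^1(\mathbb T)$ and $|\zeta|=1$; in view of the normalization $m(\mathbb T)=1$ we have $A = \frac{1}{2\pi}\int_{-\pi}^\pi e^{-it}\varphi(e^{it})\,dt$ and $B = \frac{1}{2\pi}\int_{-\pi}^\pi e^{it}\varphi(e^{it})\,dt$. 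By \eqref{Ivarphi} this gives $I(\varphi) = \lambda_1|A| + \lambda_2|B|$, so the task is to show that the right-hand side of \eqref{maxeta} equals $\lambda_1|A| + \lambda_2|B|$.

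Next, for a fixed $\eta$ with $|\eta|=1$, I would split the integrand in \eqref{maxeta} into the $\lambda_1$-part and the $\lambda_2$-part and integrate each term separately, noting that $e^{2it}e^{-it}=e^{it}$, to obtain
\[
\frac{1}{2\pi}\int_{-\pi}^{\pi} e^{-it}\varphi(e^{it})\Bigl(\lambda_1 + \lambda_2\tfrac{\overline\eta}{\eta}e^{2it}\Bigr)\,dt = \lambda_1 A + \lambda_2\,\tfrac{\overline\eta}{\eta}\,B .
\]
Then I would observe that, since $|\eta|=1$ gives $\overline\eta/\eta = \overline{\eta}^{2}$, this factor runs over all of $\mathbb T$ as $\eta$ does (write $\eta = e^{i\psi}$, so $\overline\eta/\eta = e^{-2i\psi}$). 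Consequently the maximum over $|\eta|=1$ of $\bigl|\lambda_1 A + \lambda_2\tfrac{\overline\eta}{\eta}B\bigr|$ equals $\max_{|\xi|=1}\bigl|\lambda_1 A + \xi\,\lambda_2 B\bigr|$.

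Finally I would invoke \eqref{brisati}: writing $\eta = e^{i\theta}$ one has $|z\overline\eta + w\eta| = |ze^{-i\theta} + we^{i\theta}| = |z + we^{2i\theta}|$, so \eqref{brisati} is equivalent to $\max_{|\xi|=1}|z + \xi w| = |z| + |w|$; applying this with $z = \lambda_1 A$ and $w = \lambda_2 B$ yields $\max_{|\xi|=1}|\lambda_1 A + \xi\lambda_2 B| = \lambda_1|A| + \lambda_2|B| = I(\varphi)$, which is exactly \eqref{maxeta}. I do not anticipate any real obstacle here; the only point deserving a word of care is the remark that $\overline\eta/\eta$ sweeps the whole unit circle, which is what frees up the phase needed to match \eqref{brisati}. (If one prefers not to route through \eqref{brisati}, the bound $|\lambda_1 A + \xi\lambda_2 B|\le\lambda_1|A|+\lambda_2|B|$ is the triangle inequality, and equality is attained by taking $\xi = A\overline B/(|A||B|)$ when $A,B\neq 0$, and trivially otherwise.)
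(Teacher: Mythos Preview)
Your proof is correct and is essentially the same as the paper's: both arguments reduce the identity to \eqref{brisati} via linearity of the integral, the only cosmetic difference being that the paper proceeds from $I(\varphi)$ toward the right-hand side (applying \eqref{brisati} with $z=\lambda_1 A$, $w=\lambda_2 B$ to introduce $\eta$ and $\overline\eta$, then factoring out $\eta$), whereas you start from the right-hand side, substitute $\xi=\overline\eta/\eta$, and invoke the equivalent form $\max_{|\xi|=1}|z+\xi w|=|z|+|w|$.
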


\begin{proof}

\begin{align}
I(\varphi) = & \frac{1}{2\pi} \left\vert \int_{-\pi}^{\pi} \lambda_1 \varphi(e^{it})e^{-it}dt\right\vert + 
\frac{1}{2\pi} \left\vert \int_{-\pi}^{\pi} \lambda_2
\varphi(e^{it})e^{it}dt\right\vert       \qquad \mbox{\rm by} \quad  \eqref{Ivarphi}                  \nonumber \\
= & \frac{1}{2\pi}\max_{\vert \eta \vert = 1}
\left\vert\lambda_1{\eta} \int_{-\pi}^{\pi}(\varphi(e^{it})e^{-it}dt +\lambda_2\overline{\eta} \int_{-\pi}^{\pi}
\varphi(e^{it})e^{it}dt\right\vert            \qquad \mbox{\rm by} \quad  \eqref{brisati}                         \nonumber\\
=& \frac{1}{2\pi}\max_{\vert\eta\vert= 1}
\left\vert \int_{-\pi}^{\pi}\varphi(e^{it})\left(\lambda_1{\eta} e^{-it} +\lambda_2\overline{\eta} e^{it}\right)dt\right\vert\nonumber\\
=&\frac{1}{2\pi}\max_{\vert\eta\vert= 1}
\left\vert \int_{-\pi}^{\pi}e^{-it} \varphi (e^{it}) \left( \lambda_1  + \lambda_2 \frac{\overline{\eta}}{\eta} e^{2it}\right) dt\right\vert .    \nonumber
\qedhere \end{align}
\end{proof}

\begin{definition}\label{shLp}
Let $1 \leq p \leq \infty$. The space $h_{\alpha, \beta}^p({\mathbb D})$ consists of all $u \in h_{\alpha, \beta}(\mathbb D)$ such that
\begin{equation}\label{shpnorm}
\lVert u \rVert_{\alpha, \beta; p} = \sup_{0 \leq r < 1}     \left( \int_{\mathbb T} 
\vert u (r \zeta) \vert^p dm (\zeta) \right)^\frac{1}{p} < \infty, \qquad 1 \leq p < \infty,
\end{equation}
\begin{equation}\label{shinfty}
\lVert u \rVert_{\alpha, \beta; p}  = \sup_{z \in \mathbb D} |u(z)|<\infty, \qquad p = +\infty.
\end{equation}
\end{definition}

These spaces appeared for the first time, in a more general setting, in \cite{ABC}
The following theorem is a special case of results from \cite{ABC} and \cite{GMAhp}.

\begin{theorem}\label{ReprLp}
Let $u \in h^p_{\alpha, \beta} (\mathbb D)$, $ 1 < p\leq\infty $.
Then there is a unique $ \psi \in L^p ({\mathbb T}) $ such that $u = P_{\alpha, \beta}[\psi]$.
\end{theorem}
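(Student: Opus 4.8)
\textbf{Proof proposal for Theorem~\ref{ReprLp}.}
The plan is to treat the cases $p=\infty$ and $1<p<\infty$ separately, but in both cases the key idea is the same: extract a boundary function as a weak-$*$ (or weak) limit of the dilates $u_r(\zeta)=u(r\zeta)$ as $r\to 1^-$, and then use the reproducing property of the Poisson kernel together with a semigroup/approximate-identity argument to identify $u$ with the Poisson integral of that limit. First I would recall from \cite{ABC} the basic structural facts for $h^p_{\alpha,\beta}(\mathbb D)$: membership in $h^p_{\alpha,\beta}$ forces the $L^p$ norms of the dilates $u_r$ to be uniformly bounded, and one has a reproduction formula $u(r z) = \int_{\mathbb T} P_{\alpha,\beta}(z,\zeta)\, u_r(\zeta)\, dm(\zeta)$ for $0\le r<1$ (this is the statement that $u$ composed with a dilation is the Poisson integral of its restriction to a smaller circle, which follows from the known Poisson representation of $(\alpha,\beta)$-harmonic functions that are continuous up to a slightly smaller disc).

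For $p=\infty$, the family $\{u_r\}_{0\le r<1}$ is bounded in $L^\infty(\mathbb T) = (L^1(\mathbb T))^*$, so by Banach--Alaoglu there is a sequence $r_n\to 1^-$ and $\psi\in L^\infty(\mathbb T)$ with $u_{r_n}\to\psi$ weak-$*$. Fixing $z\in\mathbb D$ and noting that $\zeta\mapsto P_{\alpha,\beta}(z,\zeta)$ is a fixed $L^1(\mathbb T)$ function (indeed it is bounded, using \eqref{procuab} and $|z\overline\zeta|=|z|<1$), I would pass to the limit in $u(r_n z) = \int_{\mathbb T} P_{\alpha,\beta}(z,\zeta) u_{r_n}(\zeta)\, dm(\zeta)$, while on the left $u(r_n z)\to u(z)$ by continuity of $u$; this yields $u = P_{\alpha,\beta}[\psi]$. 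For $1<p<\infty$ the argument is identical with weak convergence in the reflexive space $L^p(\mathbb T)$ in place of weak-$*$, testing against the $L^{p'}$ (in fact bounded) function $P_{\alpha,\beta}(z,\cdot)$; one gets $\psi\in L^p(\mathbb T)$ and again $u=P_{\alpha,\beta}[\psi]$.

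It remains to prove uniqueness, i.e.\ that $P_{\alpha,\beta}[\psi]\equiv 0$ on $\mathbb D$ forces $\psi=0$ a.e.\ on $\mathbb T$; equivalently, that $P_{\alpha,\beta}$ is injective on $L^p(\mathbb T)$. The cleanest route is to show that for $\psi\in L^1(\mathbb T)$ one recovers $\psi$ from $P_{\alpha,\beta}[\psi]$ by a boundary limit: either $P_{\alpha,\beta}[\psi](r\zeta)\to\psi(\zeta)$ as $r\to 1^-$ in $L^p(\mathbb T)$ (for $1\le p<\infty$) or in the weak-$*$ sense (for $p=\infty$). This in turn reduces to the statement that the dilated kernels $\zeta\mapsto P_{\alpha,\beta}(r\zeta,\eta)$, viewed as convolution-type kernels on $\mathbb T$, form an approximate identity as $r\to 1^-$: they have integral tending to $1$ (which is exactly the normalization built into $c_{\alpha,\beta}$, cf.\ \eqref{L1est} and the known fact $P_{\alpha,\beta}[1]=1$), uniformly bounded $L^1$-mass (again \eqref{L1est} via Theorem~\ref{Uab}), and mass concentrating near $\zeta=\eta$ because of the factor $(1-|z|^2)^{\Re\alpha+\Re\beta+1}$ against $|1-z\overline\eta|^{\Re\alpha+\Re\beta+2}$ in the bound \eqref{procuab}. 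I expect this last point --- establishing the approximate-identity behaviour and hence the boundary-value recovery that gives uniqueness --- to be the main obstacle, both because $P_{\alpha,\beta}$ is in general complex-valued (so one cannot invoke positivity and must argue with absolute values through \eqref{procuab} and Proposition~\ref{MA}) and because the concentration estimate near the boundary must be made quantitative; since the statement is cited as known from \cite{ABC} and \cite{GMAhp}, in the write-up I would either reproduce this approximate-identity lemma or simply cite those sources for it and present the weak-compactness extraction above as the proof of existence.
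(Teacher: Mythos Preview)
The paper does not prove Theorem~\ref{ReprLp}; it merely records it as a special case of results in \cite{ABC} and \cite{GMAhp}. So there is no in-paper proof to compare against. That said, your sketch contains a genuine gap that is worth flagging.

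The step that fails is the ``reproduction formula'' $u(rz)=\int_{\mathbb T}P_{\alpha,\beta}(z,\zeta)\,u_r(\zeta)\,dm(\zeta)$. You justify it by saying that $z\mapsto u(rz)$ is $(\alpha,\beta)$-harmonic and continuous up to $\overline{\mathbb D}$, so the Dirichlet solution applies. But $L_{\alpha,\beta}$ does \emph{not} commute with dilations unless $\alpha=\beta=0$: a direct computation (substitute $v(z)=u(rz)$ into $L_{\alpha,\beta}$ and use $L_{\alpha,\beta}u=0$ at the point $rz$) gives
\[
L_{\alpha,\beta}v(z)=\frac{(1-|z|^2)(1-r^2)}{1-r^2|z|^2}\bigl[\alpha r z\,u_z(rz)+\beta r\bar z\,u_{\bar z}(rz)-\alpha\beta\,u(rz)\bigr],
\]
which is not identically zero for generic $(\alpha,\beta)$-harmonic $u$. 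Equivalently, in the series expansion from \cite{KO} the basis functions carry radial factors $F_n(|z|^2)$ that are hypergeometric rather than pure powers, so $F_n(r^2|z|^2)\neq F_n(r^2)F_n(|z|^2)$ and $P_{\alpha,\beta}[u_r]\neq u(r\,\cdot)$. Consequently, after extracting a weak(-$*$) limit $\psi$ of $u_{r_n}$ and forming $P_{\alpha,\beta}[\psi]$, you cannot identify it with $u$ via the formula you wrote.

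What the cited proofs actually use is the series expansion: one reads off, from the Fourier coefficients $\widehat{u_r}(n)=a_n\phi_n(r)$ with $\phi_n(1)=1$, a candidate sequence $(a_n)$, builds $\psi$ with $\hat\psi(n)=a_n$, and then uses the uniform $L^p$ bound on $u_r$ together with Fatou-type/weak-limit arguments to show $\psi\in L^p$ and $u=P_{\alpha,\beta}[\psi]$. Your uniqueness discussion via approximate-identity behaviour of $P_{\alpha,\beta}(r\,\cdot,\eta)$ is on the right track (and is indeed how boundary recovery is handled in \cite{KO,GMAhp}), but the existence half needs the series machinery rather than the naive dilation argument.
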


\section{Main results}

The next Lemma generalizes estimate \ref{L1est}.
\begin{lemma}\label{estuabp}
Let $ u_{\alpha,\beta} $ be as in \eqref{ualphabeta} and  $ 1\leq p<\infty$. Then
for  $ z\in \mathbb{D}$ 
\begin{equation}
\int_{\mathbb T}\vert u_{\alpha,\beta} (z \overline \zeta) \vert^p dm(\zeta)  \leq 
e^{\frac{p\pi}{2}
\vert \Im  \alpha-\Im \beta \vert} \frac{\Gamma(p(\Re \alpha + \Re \beta+2)-1)}{\Gamma^2\left(\frac{p(\Re \alpha + \Re \beta+2)}{2}\right)}(1-\vert z\vert^2)^{1-p}.
\end{equation}
\end{lemma}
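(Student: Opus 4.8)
The plan is to reduce the $L^p$-average of $|u_{\alpha,\beta}(z\overline\zeta)|$ to an $L^1$-average of the single function $u_{p\alpha',p\beta'}$ for suitably chosen real parameters, and then quote estimate \eqref{L1est}. First I would apply the pointwise bound \eqref{procuab} from Theorem \ref{Uab} with the argument $z\overline\zeta$ in place of $z$. Since $|z\overline\zeta| = |z|$ for $\zeta\in\mathbb T$, this gives
\begin{equation}\label{step1}
|u_{\alpha,\beta}(z\overline\zeta)|^p \leq e^{\frac{p\pi}{2}|\Im\alpha - \Im\beta|}\,\frac{(1-|z|^2)^{p(\Re\alpha+\Re\beta+1)}}{|1-z\overline\zeta|^{p(\Re\alpha+\Re\beta+2)}}.
\end{equation}
Write $\sigma = \Re\alpha + \Re\beta$ so that the denominator exponent is $p(\sigma+2) = 2s$ with $s = \tfrac{p(\sigma+2)}{2} = \tfrac{p(\Re\alpha+\Re\beta+2)}{2}$, and note $s > \tfrac12$ because $\sigma > -1$ forces $\sigma + 2 > 1$ and $p \geq 1$.

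Next I would integrate \eqref{step1} over $\zeta\in\mathbb T$. Parametrizing $\zeta = e^{i\theta}$ and using that $|1 - z\overline\zeta| = |1 - ze^{-i\theta}|$, the integral becomes exactly the one controlled by Proposition \ref{MA} with $m = p(\sigma+1)$ and the above $s$. Proposition \ref{MA} then yields
\begin{equation}\label{step2}
\frac{1}{2\pi}\int_0^{2\pi}\frac{(1-|z|^2)^{p(\sigma+1)}}{|1-ze^{-i\theta}|^{p(\sigma+2)}}\,d\theta \leq \frac{\Gamma(p(\sigma+2)-1)}{\Gamma^2\!\left(\frac{p(\sigma+2)}{2}\right)}(1-|z|^2)^{p(\sigma+1) - p(\sigma+2) + 1} = \frac{\Gamma(p(\sigma+2)-1)}{\Gamma^2\!\left(\frac{p(\sigma+2)}{2}\right)}(1-|z|^2)^{1-p}.
\end{equation}
Combining \eqref{step1} and \eqref{step2} gives precisely the claimed bound, after substituting back $\sigma = \Re\alpha + \Re\beta$ in the Gamma-factors and the exponent.

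There is really no serious obstacle here: the only points requiring care are (i) checking that the hypothesis $s > \tfrac12$ of Proposition \ref{MA} holds, which follows from $\Re\alpha+\Re\beta > -1$ and $p\geq 1$ as noted above; (ii) confirming the exponent arithmetic $p(\sigma+1) - p(\sigma+2) + 1 = 1 - p$; and (iii) noting that the exponential prefactor $e^{\frac{p\pi}{2}|\Im\alpha-\Im\beta|}$ pulls out of the integral since it does not depend on $\zeta$. One could alternatively present this as a genuine generalization of \eqref{L1est} by observing that $|u_{\alpha,\beta}|^p$ is, up to the exponential constant and a power of $1-|z|^2$, dominated by $|u_{\alpha',\beta'}|$ for $\alpha' = \beta'$ real with $\alpha'+\beta'+2 = p(\Re\alpha+\Re\beta+2)$, i.e. $\Re\alpha'+\Re\beta' = p(\Re\alpha+\Re\beta+2) - 2$; then \eqref{L1est} applied to $u_{\alpha',\beta'}$ gives the Gamma-quotient directly. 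I would use the Proposition \ref{MA} route since it is the most transparent and self-contained.
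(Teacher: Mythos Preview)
Your proposal is correct and follows exactly the paper's own proof: apply the pointwise estimate \eqref{procuab} to $u_{\alpha,\beta}(z\overline\zeta)$, raise to the $p$th power, and then invoke Proposition~\ref{MA} after checking $2s = p(\Re\alpha+\Re\beta+2) > p \geq 1$. The paper's argument is just a terser version of what you wrote.
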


\begin{proof}
Using estimate \eqref{procuab} we get 
$$ 
\int_{\mathbb{T}}\vert u_{\alpha,\beta}(z\overline \zeta)\vert^p dm(\zeta)\leq e^{\frac{p\pi}{2}
\vert \Im  \alpha-\Im \beta\vert}\int_{\mathbb{T}}\frac{(1-\vert z\vert^2)^{p(\Re \alpha + \Re \beta+1)}}{\vert 1-z\overline \zeta\vert^{p(\Re \alpha + \Re \beta+2)}}dm(\zeta),
$$
since $p(\Re \alpha + \Re \beta+2)>p\geq 1$ we can apply Proposition \ref{MA} to obtain \eqref{estuabp}.
\end{proof}

\begin{theorem}\label{Df0Lp}
Let $1 \leq p \leq +\infty$ and let $q$ be the exponent conjugate to $p$. Then 
\begin{equation}\label{dfoLp}
\sup_{\Vert \varphi \Vert_p \leq 1} \lVert DP_{\alpha,\beta} [\varphi] (0) \rVert =
\frac{\vert c_{\alpha,\beta}\vert}{2\pi}
\left(\int_{-\pi}^{\pi}\left\vert\vert \alpha+1\vert  +\vert \beta+1\vert e^{2it}\right\vert^qdt\right)^{\frac{1}{q}},
\end{equation}
with obvious modification in the case $p = 1$, $q = +\infty$.
\end{theorem}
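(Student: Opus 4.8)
The plan is to reduce everything to the closed form \eqref{Df0new}, which says $\lVert DP_{\alpha,\beta}[\varphi](0)\rVert = |c_{\alpha,\beta}|\,I(\varphi)$; thus the theorem is equivalent to the identity $\sup_{\|\varphi\|_p\le 1} I(\varphi) = \tfrac{1}{2\pi}N$, where $N := \big(\int_{-\pi}^\pi \bigl|\,|\alpha+1|+|\beta+1|e^{2it}\,\bigr|^q\,dt\big)^{1/q}$ (read as $N = |\alpha+1|+|\beta+1|$ when $p=1$, $q=\infty$). Working from Lemma \ref{Maxeta}, I write $\lambda_1=|\alpha+1|$, $\lambda_2=|\beta+1|$ and $g_\eta(t)=\lambda_1+\lambda_2\tfrac{\overline\eta}{\eta}e^{2it}$, so that \eqref{maxeta} becomes $I(\varphi)=\tfrac{1}{2\pi}\max_{|\eta|=1}\big|\int_{-\pi}^\pi e^{-it}\varphi(e^{it})\,g_\eta(t)\,dt\big|$. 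The first thing I would record is that $\big(\int_{-\pi}^\pi |g_\eta(t)|^q\,dt\big)^{1/q}$ does not depend on $\eta$: since $\overline\eta/\eta$ is unimodular, a change of variables absorbing this phase together with the periodicity of the integrand reduces it to $N$.

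\emph{Upper bound.} I would apply Hölder's inequality in the variable $t$ to the integral in \eqref{maxeta}, using $|e^{-it}\varphi(e^{it})|=|\varphi(e^{it})|$ and the previous remark, to get $\big|\int_{-\pi}^\pi e^{-it}\varphi(e^{it})\,g_\eta(t)\,dt\big|\le \|\varphi\|_p\,N$ for every unimodular $\eta$; hence $I(\varphi)\le\tfrac{1}{2\pi}\|\varphi\|_p\,N$, so $\sup_{\|\varphi\|_p\le 1}\lVert DP_{\alpha,\beta}[\varphi](0)\rVert\le\tfrac{|c_{\alpha,\beta}|}{2\pi}N$, which is the ``$\le$'' half of \eqref{dfoLp}. (For $p=1$ one replaces the $L^q$ factor by $\|g_\eta\|_\infty=\lambda_1+\lambda_2$.)

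\emph{Sharpness.} Because $\max_{|\eta|=1}\big|\int e^{-it}\varphi\,g_\eta\,dt\big|\ge\big|\int e^{-it}\varphi\,g_1\,dt\big|$, it is enough to produce near-extremal data for the single symbol $g(t):=g_1(t)=\lambda_1+\lambda_2 e^{2it}$; note $g$ vanishes only if $\lambda_1=\lambda_2$, and then only at the two points with $e^{2it}=-1$, so it is nonzero a.e.\ and the classical Hölder extremizers make sense. For $1<p<\infty$ I take $\varphi(e^{it})=c\,e^{it}\,\overline{g(t)}\,|g(t)|^{q-2}$ with $c>0$ chosen so that $\|\varphi\|_p=1$; then $e^{-it}\varphi(e^{it})g(t)=c\,|g(t)|^q\ge 0$ and the equality case of Hölder gives $I(\varphi)\ge\tfrac{1}{2\pi}N$. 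For $p=\infty$ one uses the unimodular $\varphi(e^{it})=e^{it}\,\overline{g(t)}/|g(t)|$ (defined arbitrarily on $\{g=0\}$), for which again $e^{-it}\varphi(e^{it})g(t)=|g(t)|\ge 0$. For $p=1$ the genuine extremizer is a unit mass at the point $t=0$ where $|g|$ attains its maximum $\lambda_1+\lambda_2$; instead I would take a sequence $\varphi_n$ of normalized functions supported in shrinking neighbourhoods of $t=0$ carrying the right unimodular factor, so that $\int_{-\pi}^\pi e^{-it}\varphi_n(e^{it})g(t)\,dt\to\lambda_1+\lambda_2=N$ — this is why the statement has a supremum, not a maximum. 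In each case the lower bound $\tfrac{|c_{\alpha,\beta}|}{2\pi}N$ matches the upper bound, which proves \eqref{dfoLp}.

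The two points requiring care are the interchange of $\sup_\varphi$ and $\max_\eta$ — which is exactly where the $\eta$-independence of $N$ is used — and the case $p=1$, where the absence of an $L^1$ extremizer forces the concentration argument; I expect the latter to be the only real (and still mild) obstacle, the Hölder equality computations for $1<p\le\infty$ being routine.
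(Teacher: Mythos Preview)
Your proof is correct and follows essentially the same route as the paper: both start from \eqref{Df0new} and Lemma \ref{Maxeta}, use the $\eta$-independence of $\|g_\eta\|_q$, and conclude via $L^p$--$L^q$ duality. The only difference is presentational: the paper invokes duality in one stroke (interchanging $\sup_\varphi$ and $\max_\eta$, which is legitimate since both are suprema, and then reading off $\sup_{\|\varphi\|_p\le 1}|\int e^{-it}\varphi\,g_\eta\,dt|=\|g_\eta\|_q$), whereas you unpack this into a H\"older upper bound and an explicit construction of extremizers, with a separate concentration argument at $p=1$.
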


\begin{proof}
Using \eqref{Df0new}, \eqref{maxeta} and duality  between $L^p$ and $L^q$ spaces we have

\begin{align*}
\sup_{\Vert \varphi \Vert_p \leq 1} \lVert DP_{\alpha,\beta} [\varphi] (0) \rVert =
 &  \vert c_{\alpha,\beta} \vert \sup_{\Vert \varphi \Vert_p \leq 1}
I(\varphi)             \nonumber       \\
= & \frac{ \vert c_{\alpha,\beta} \vert}{2\pi} \max_{\vert\eta\vert= 1} \sup_{\Vert \varphi \Vert_p \leq 1}
\left\vert \int_{-\pi}^{\pi}e^{-it} \varphi(e^{it})\left(\lambda_1  +\lambda_2\frac{\overline{\eta}}{\eta} e^{2it}\right)dt\right\vert       \\
 =   &  \frac{ \vert c_{\alpha,\beta} \vert }{2\pi}
\max_{\vert\eta\vert= 1}
\left(\int_{-\pi}^{\pi}\left\vert\lambda_1  +\lambda_2\frac{\overline{\eta}}{{\eta}} e^{2it}\right\vert^qdt\right)^{\frac{1}{q}}
\\  = & \frac{\vert c_{\alpha,\beta}\vert}{2\pi}
\left(\int_{-\pi}^{\pi}\left\vert\vert \alpha+1\vert  +\vert \beta+1\vert e^{2it}\right\vert^qdt\right)^{\frac{1}{q}}.
\end{align*}
The last equality rests on the fact that $\overline \eta / \eta = e^{i\theta}$ and, since the interval of integration is $(-\pi, \pi)$,
the penultimate integral is independent of $\eta$.
\end{proof}

If $p = +\infty$ then $q = 1$ and 
 \begin{align*}
   \int_{-\pi}^{\pi}\left\vert\lambda_1\  +\lambda_2 e^{2it}\right\vert dt = & 4
(\lambda_1+\lambda_2)\int_{0}^{\frac{\pi}{2}}
\sqrt{1-\frac{4\lambda_1\lambda_2}{(\lambda_1+\lambda_2)^2}\sin^2tdt}   \\
=&4(\lambda_1+\lambda_2)E\left(\frac{2\sqrt{\lambda_1\lambda_2}}{\lambda_1+\lambda_2}\right)   \\
=&4(\lambda_1+\lambda_2)\frac{\pi}{2}F\left(\frac 12,-\frac 12;1;\frac{4\lambda_1\lambda_2}{(\lambda_1+\lambda_2)^2}\right),
\end{align*}
where $E$ denotes complete elliptic integral of second kind, as in the next proposition, which follows from the above theorem.

\begin{proposition}\label{Df0infty}
For all $\varphi \in L^\infty (\mathbb T)$ we have
$$ \lVert DP_{\alpha,\beta}[\varphi](0)\rVert
\leq
D(\alpha, \beta)
\lVert\varphi\rVert_\infty$$
where the constant 
$$D(\alpha, \beta) = 
\frac{2\vert c_{\alpha,\beta}\vert (\vert \alpha+1\vert+\vert \beta+1\vert)E\left( 2 \frac{\sqrt{\vert( \alpha+1)(\beta+1)\vert}}{\vert \alpha+1\vert+\vert \beta+1\vert}\right)}{\pi}
$$
is the best possible.
\end{proposition}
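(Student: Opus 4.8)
The plan is to derive Proposition~\ref{Df0infty} directly from Theorem~\ref{Df0Lp} specialized to $p = +\infty$, $q = 1$. First I would recall that Theorem~\ref{Df0Lp} gives the sharp value
$$\sup_{\lVert \varphi \rVert_\infty \leq 1} \lVert DP_{\alpha,\beta}[\varphi](0) \rVert = \frac{\vert c_{\alpha,\beta}\vert}{2\pi} \int_{-\pi}^{\pi} \bigl\vert \, \vert \alpha+1\vert + \vert \beta+1\vert e^{2it} \bigr\vert \, dt,$$
so the entire task reduces to evaluating this one trigonometric integral in closed form. Writing $\lambda_1 = \vert \alpha+1\vert$, $\lambda_2 = \vert \beta+1\vert$, I would expand $\vert \lambda_1 + \lambda_2 e^{2it}\vert^2 = \lambda_1^2 + \lambda_2^2 + 2\lambda_1\lambda_2 \cos 2t = (\lambda_1+\lambda_2)^2 - 4\lambda_1\lambda_2 \sin^2 t$, which is exactly the computation already displayed in the paragraph preceding the proposition. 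Using the symmetry of the integrand over $(-\pi,\pi)$ to reduce to $4\int_0^{\pi/2}$ and factoring out $(\lambda_1+\lambda_2)$, one gets $4(\lambda_1+\lambda_2) E\bigl(2\sqrt{\lambda_1\lambda_2}/(\lambda_1+\lambda_2)\bigr)$ with $E$ the complete elliptic integral of the second kind (in the modulus convention $E(k) = \int_0^{\pi/2}\sqrt{1 - k^2\sin^2 t}\,dt$); this is a legitimate modulus since $2\sqrt{\lambda_1\lambda_2} \leq \lambda_1+\lambda_2$ by AM--GM. Dividing by $2\pi$ yields precisely $D(\alpha,\beta)$.

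Concretely, the proof would read: by Theorem~\ref{Df0Lp} with $p=+\infty$ (hence $q=1$),
$$\sup_{\lVert \varphi \rVert_\infty \leq 1} \lVert DP_{\alpha,\beta}[\varphi](0) \rVert = \frac{\vert c_{\alpha,\beta}\vert}{2\pi} \int_{-\pi}^{\pi} \bigl\vert \lambda_1 + \lambda_2 e^{2it} \bigr\vert \, dt = \frac{\vert c_{\alpha,\beta}\vert}{2\pi} \cdot 4(\lambda_1+\lambda_2) E\!\left(\frac{2\sqrt{\lambda_1\lambda_2}}{\lambda_1+\lambda_2}\right) = D(\alpha,\beta),$$
where the middle equality is the elliptic-integral evaluation recorded above the proposition. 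The inequality $\lVert DP_{\alpha,\beta}[\varphi](0)\rVert \leq D(\alpha,\beta)\lVert\varphi\rVert_\infty$ for all $\varphi \in L^\infty(\mathbb T)$ follows by homogeneity (apply the supremum bound to $\varphi/\lVert\varphi\rVert_\infty$ when $\varphi \neq 0$, the case $\varphi = 0$ being trivial), and sharpness of the constant $D(\alpha,\beta)$ is exactly the assertion that the first displayed quantity is a supremum, which Theorem~\ref{Df0Lp} states as an equality.

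There is essentially no obstacle here: the nontrivial content — the duality argument producing the $L^q$-norm of the kernel and, in particular, the sharpness — is entirely contained in Theorem~\ref{Df0Lp}, and the passage from a general $\varphi$ to the normalized one is a one-line homogeneity remark. The only thing requiring a modicum of care is bookkeeping with the elliptic integral: making sure the modulus/parameter convention is stated so that $E\bigl(2\sqrt{\lambda_1\lambda_2}/(\lambda_1+\lambda_2)\bigr)$ is unambiguous, and confirming the argument lies in $[0,1]$ via $2\sqrt{\lambda_1\lambda_2} \leq \lambda_1 + \lambda_2$ (with equality iff $\vert\alpha+1\vert = \vert\beta+1\vert$). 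If one wished to be fully self-contained one could also note the hypergeometric identity $E(k) = \tfrac{\pi}{2}\,{}_2F_1(\tfrac12,-\tfrac12;1;k^2)$ already used in the paper to recast $D(\alpha,\beta)$; but this is cosmetic and not needed for the proof itself.
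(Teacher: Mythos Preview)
Your proposal is correct and follows essentially the same approach as the paper: the paper derives the proposition by specializing Theorem~\ref{Df0Lp} to $p=+\infty$, $q=1$, and evaluating the resulting integral $\int_{-\pi}^{\pi}\vert \lambda_1+\lambda_2 e^{2it}\vert\,dt$ as $4(\lambda_1+\lambda_2)E\bigl(2\sqrt{\lambda_1\lambda_2}/(\lambda_1+\lambda_2)\bigr)$, exactly as you do. Your added remarks on homogeneity, the AM--GM check on the modulus, and the elliptic-integral convention are welcome clarifications but do not depart from the paper's argument.
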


\begin{remark}\label{remark1}
Proposition \ref{Df0infty} was obtained by specializing Theorem \ref{Df0Lp} to the case $p = +\infty$. Let us specialize further by  considering $T_\alpha$-harmonic functions where $\alpha > -1$ is real. In other words, we are considering $(\alpha/2,
\alpha/2)$-harmonic functions. Since $E(1) = 1$ and $c_{\alpha/2, \alpha/2} = \Gamma^2(1 + \alpha/2)/\Gamma ( 1 + \alpha)$ the above proposition gives us the following estimate
$$
 \lVert Df (0)\rVert  \leq  \frac{2(\alpha+2)}{\pi}\frac{\Gamma^2\left(\frac{\alpha}{2}+1\right)}{\Gamma(\alpha+1)}, \qquad f = P_{\alpha/2, \alpha/2}[\varphi], \quad \| \varphi \|_\infty \leq 1.
$$
This is in fact Theorem 5 from \cite{MA} stated for $T_\alpha$ harmonic functions $f : \mathbb D \rightarrow\mathbb D$. 
Indeed, any $T_\alpha$ harmonic function $f : \mathbb D \rightarrow \mathbb D$ has representation 
$ f = P_{\alpha/2, \alpha/2}[\varphi]$ with $ \| \varphi \|_\infty \leq 1$, see \cite{ABC} or \cite{GMAhp}. 
Of course, in the harmonic case we get classical Colonna's estimate at the origin with constant $4/\pi$.
\end{remark}

\begin{theorem}\label{Dfzp}
Let $ \varphi\in L^p(\mathbb{T}), 1\leq p\leq \infty. $ Then
$$ \lVert DP_{\alpha,\beta}[\varphi](z)\rVert\leq 
C_{\alpha,\beta,p}\frac{\vert \alpha+1\vert +\vert \beta+1\vert +\vert \alpha z\vert +\vert \beta z\vert}{(1-\vert z\vert^2)^{1+\frac 1p}}\lVert\varphi\rVert_{L^p(\mathbb{T})},$$ 
where $ C_{\alpha,\beta,p} $  is a constant that depends only on $\alpha ,\beta$ and $p$. 
\end{theorem}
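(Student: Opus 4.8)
\textbf{Proof proposal for Theorem \ref{Dfzp}.}
The plan is to start from the exact formula \eqref{Dfz} for $\lVert DP_{\alpha,\beta}[\varphi](z)\rVert$ and to bound each of the two integrals separately; by symmetry (interchanging the roles of $\alpha$ and $\beta$ and of $z$ and $\overline z$) it suffices to treat the first one. First I would use the identity \eqref{identity} to rewrite the kernel of the first integral as
$$
\frac{(\alpha+1)\overline\zeta(1-\overline z\zeta) - \beta\overline z(1-\overline\zeta z)}{(1-\vert z\vert^2)(1-z\overline\zeta)}\, u_{\alpha,\beta}(z\overline\zeta),
$$
so that its modulus is bounded by
$$
\frac{\vert\alpha+1\vert\,\vert 1 - \overline z\zeta\vert + \vert\beta\vert\,\vert z\vert\,\vert 1 - \overline\zeta z\vert}{(1-\vert z\vert^2)\,\vert 1 - z\overline\zeta\vert}\,\vert u_{\alpha,\beta}(z\overline\zeta)\vert
\leq \frac{\vert\alpha+1\vert + \vert\beta z\vert}{1-\vert z\vert^2}\cdot\frac{\vert u_{\alpha,\beta}(z\overline\zeta)\vert}{\vert 1 - z\overline\zeta\vert},
$$
using $\vert 1 - \overline z\zeta\vert = \vert 1 - z\overline\zeta\vert$. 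The analogous bound for the second integral produces the factor $\vert\beta+1\vert + \vert\alpha z\vert$, and summing the two gives the numerator $\vert\alpha+1\vert+\vert\beta+1\vert+\vert\alpha z\vert+\vert\beta z\vert$ as claimed, all divided by $1-\vert z\vert^2$.

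It then remains to estimate $\int_{\mathbb T} \frac{\vert u_{\alpha,\beta}(z\overline\zeta)\vert}{\vert 1 - z\overline\zeta\vert}\,\vert\varphi(\zeta)\vert\,dm(\zeta)$. I would apply H\"older's inequality with exponents $p$ and $q$, which reduces the problem to showing
$$
\left(\int_{\mathbb T} \frac{\vert u_{\alpha,\beta}(z\overline\zeta)\vert^q}{\vert 1 - z\overline\zeta\vert^q}\,dm(\zeta)\right)^{1/q} \leq C_{\alpha,\beta,p}\,(1-\vert z\vert^2)^{-1/p}
$$
(with the obvious $\sup$ modification when $q=\infty$, $p=1$). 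Inserting the pointwise estimate \eqref{procuab} for $\vert u_{\alpha,\beta}\vert$, the integrand is majorized by a constant times $(1-\vert z\vert^2)^{q(\Re\alpha+\Re\beta+1)}/\vert 1 - z\overline\zeta\vert^{q(\Re\alpha+\Re\beta+3)}$, and since the exponent $q(\Re\alpha+\Re\beta+3) > q \cdot 2 \geq 1$ (recall $\Re\alpha+\Re\beta>-1$ so $\Re\alpha+\Re\beta+3>2$, and handle the edge case $q<1$? — no, $q\geq1$ always) we may invoke Proposition \ref{MA} with $2s = q(\Re\alpha+\Re\beta+3)$ and $m = q(\Re\alpha+\Re\beta+1)$. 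This yields a bound by a constant times $(1-\vert z\vert^2)^{m-2s+1} = (1-\vert z\vert^2)^{1-2q}$, whose $q$-th root is $(1-\vert z\vert^2)^{1/q - 2}$. Comparing with the target exponent $-1/p = 1/q - 1$ shows the power of $(1-\vert z\vert^2)$ coming out is actually more negative than needed by an extra factor $(1-\vert z\vert^2)^{-1}$; this means I must be more careful and \emph{not} throw away the $\vert 1 - z\overline\zeta\vert$ in the denominator of the kernel bound.

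The right bookkeeping is therefore to keep track of exactly one power of $\vert 1 - z\overline\zeta\vert^{-1}$: bound the first integrand's modulus by $\dfrac{\vert\alpha+1\vert+\vert\beta z\vert}{1-\vert z\vert^2}\cdot\dfrac{\vert u_{\alpha,\beta}(z\overline\zeta)\vert}{\vert 1 - z\overline\zeta\vert}$, apply H\"older, then use \eqref{procuab} so that the surviving integral is $\int_{\mathbb T}(1-\vert z\vert^2)^{q(\Re\alpha+\Re\beta+1)}\vert 1-z\overline\zeta\vert^{-q(\Re\alpha+\Re\beta+3)}\,dm(\zeta)$ and apply Proposition \ref{MA} with $m = q(\Re\alpha+\Re\beta+1)$, $2s = q(\Re\alpha+\Re\beta+3)$; the hypothesis $s > 1/2$ is exactly $q(\Re\alpha+\Re\beta+3) > 1$, which holds since $\Re\alpha+\Re\beta+3 > 2$ and $q \geq 1$. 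The result is $(1-\vert z\vert^2)^{m-2s+1} = (1-\vert z\vert^2)^{-2q+1}$, so after taking the $q$-th root and multiplying by the prefactor $1/(1-\vert z\vert^2)$ from the kernel bound we get total power $-2 + 1/q - 1 = -3 + 1/q$; this is still off. The honest fix — and the step I expect to be the main obstacle — is that one cannot afford the crude bound $\vert u_{\alpha,\beta}(z\overline\zeta)\vert/\vert 1-z\overline\zeta\vert \lesssim \vert 1-z\overline\zeta\vert^{-(\Re\alpha+\Re\beta+3)}(1-\vert z\vert^2)^{\Re\alpha+\Re\beta+1}$ together with a full extra $1/(1-\vert z\vert^2)$; instead, the factors $\vert 1-\overline z\zeta\vert$ and $\vert z\vert\,\vert 1-\overline\zeta z\vert$ appearing in the numerator of \eqref{identity} must be used to cancel one power of $\vert 1-z\overline\zeta\vert$ in the denominator before estimating, so that the kernel modulus is really bounded by $\dfrac{\vert\alpha+1\vert+\vert\beta z\vert}{1-\vert z\vert^2}\vert u_{\alpha,\beta}(z\overline\zeta)\vert$ with \emph{no} leftover singular factor. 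Then H\"older plus \eqref{procuab} plus Proposition \ref{MA} with $m=q(\Re\alpha+\Re\beta+1)$, $2s=q(\Re\alpha+\Re\beta+2)$ give $(1-\vert z\vert^2)^{m-2s+1}=(1-\vert z\vert^2)^{1-q}$, $q$-th root $(1-\vert z\vert^2)^{1/q-1}=(1-\vert z\vert^2)^{-1/p}$, and multiplying by the prefactor $1/(1-\vert z\vert^2)$ yields the stated power $-(1+1/p)$, with $C_{\alpha,\beta,p}$ absorbing $\vert c_{\alpha,\beta}\vert$, the constant $e^{(p\pi/2)\vert\Im\alpha-\Im\beta\vert/ \cdot}$ from \eqref{procuab}, and the Gamma-function constant from Proposition \ref{MA}; the case $p=1$ is handled by taking the supremum over $\zeta\in\mathbb T$ in place of the $L^q$ norm.
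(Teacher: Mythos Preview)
Your final paragraph is the correct argument and it is exactly the paper's proof. The long detour in the middle is caused by an arithmetic slip in your very first displayed inequality: since $\vert 1-\overline z\zeta\vert=\vert 1-\overline\zeta z\vert=\vert 1-z\overline\zeta\vert$, the numerator $\vert\alpha+1\vert\,\vert 1-\overline z\zeta\vert+\vert\beta z\vert\,\vert 1-\overline\zeta z\vert$ equals $(\vert\alpha+1\vert+\vert\beta z\vert)\,\vert 1-z\overline\zeta\vert$ and cancels the $\vert 1-z\overline\zeta\vert$ in the denominator completely, so the bound $\dfrac{\vert\alpha+1\vert+\vert\beta z\vert}{1-\vert z\vert^2}\,\vert u_{\alpha,\beta}(z\overline\zeta)\vert$ falls out immediately with no leftover singular factor. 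Once that is fixed, everything you write afterwards collapses to the paper's argument: sum the two kernel bounds to get the factor $\vert\alpha+1\vert+\vert\beta+1\vert+\vert\alpha z\vert+\vert\beta z\vert$ over $1-\vert z\vert^2$, then apply H\"older and Lemma~\ref{estuabp} (which is precisely \eqref{procuab} plus Proposition~\ref{MA} with $2s=q(\Re\alpha+\Re\beta+2)$) to produce the extra $(1-\vert z\vert^2)^{-1/p}$; for $p=1$ replace the $L^q$ norm by the pointwise bound $\vert u_{\alpha,\beta}(z\overline\zeta)\vert\le C_{\alpha,\beta}(1-\vert z\vert^2)^{-1}$, exactly as you indicate.
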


\begin{proof}
Using estimate \eqref{Dfz} and identity \eqref{identity} we have

\begin{align}\label{estimateDPalphabeta}
\lVert DP_{\alpha,\beta}[\varphi](z)\rVert\leq &
\vert c_{\alpha,\beta}\vert\left( \int_{\mathbb{T}}\frac{(\vert\alpha+1\vert+\vert\beta \bar z\vert)\vert u_{\alpha, \beta}(z\overline{\zeta})\vert }{1-\vert z\vert^2}\vert \varphi(\zeta)\vert dm(\zeta)\right.\\
+& \left.\int_{\mathbb{T}}\frac{(\vert\beta+1\vert+\vert\alpha z \vert)\vert u_{\alpha, \beta}(z\overline{\zeta})\vert }{1-\vert z\vert^2}\vert \varphi(\zeta)\vert dm(\zeta)\right)\nonumber\\
 = & 
\vert c_{\alpha,\beta}\vert\frac{\vert \alpha+1\vert +\vert \beta+1\vert +\vert \alpha z\vert +\vert \beta z\vert}{(1-\vert z\vert^2)} \int_{\mathbb{T}}\vert u_{\alpha, \beta}(z\overline{\zeta})\vert \vert \varphi(\zeta)\vert dm(\zeta). \nonumber
\end{align}

Therefore, for $ 1<p\leq +\infty$, using H$\ddot{o}$lder inequality and Lemma \ref{estuabp}, we have   
 \begin{align*}
\lVert DP_{\alpha,\beta}[\varphi](z)\rVert 
\leq &
\vert c_{\alpha,\beta}\vert e^{\frac{\pi}{2}
\vert \Im  \alpha-\Im \beta\vert} \left(\frac{\Gamma(q(\Re \alpha + \Re \beta+2)-1)}{\Gamma^2\left(\frac{q(\Re \alpha + \Re \beta+2)}{2}\right)}\right)^\frac{1}{q}\frac{\vert \alpha+1\vert +\vert \beta+1\vert +\vert \alpha z\vert +\vert \beta z\vert}{(1-\vert z\vert^2)^{1+\frac 1p}}\lVert\varphi\rVert_{L^p(\mathbb{T})}.
\end{align*}

If $ p=1,$  let us notice that for fixed $ \zeta \in \mathbb{T}$ by \eqref{procuab}, we obtain

\begin{equation}\label{procenaualphabeta}\vert u_{\alpha,\beta}(z\overline \zeta)\vert \leq 
 e^{\frac{\pi}{2}
\vert \Im  \alpha-\Im \beta\vert}\frac{(1-\vert z\vert^2)^{\Re \alpha + \Re \beta+1}}{(1-\vert z\vert)^{\Re \alpha + \Re \beta+2}}
\leq e^{\frac{\pi}{2}
\vert \Im  \alpha-\Im \beta\vert}\frac{2^{\Re \alpha + \Re \beta+2}}{1-\vert z\vert^2} 
\end{equation}
and then using estimate \eqref{estimateDPalphabeta}, we have
$$ \lVert DP_{\alpha,\beta}[\varphi](z)\rVert    \leq 
C_{\alpha,\beta}
\vert c_{\alpha,\beta}\vert e^{\frac{\pi}{2}
\vert \Im  \alpha-\Im \beta\vert}    \frac{\lVert\varphi\rVert_{L^1(\mathbb{T})}}{(1-\vert z\vert^2)^{2}}. \qedhere
$$
\end{proof}

In  Remark \ref{remark1} we considered estimates of derivative at the origin.   Similarly let $f$ be a $ T_\alpha$-harmonic function such that $ \vert f(z)\leq M $ for all $ z\in \mathbb{D},$  where $ \alpha>-1$. Then the above theorem and integral representation by boundary values given by Theorem \ref{ReprLp} lead to the following estimate 

$$
 \lVert Df(z)\rVert\leq \frac{2+\alpha +\vert \alpha z\vert}{1-\vert z\vert^2} M, \qquad z \in \mathbb D,
$$
which appeared in \cite{CHENVuorinen}.

  Theorem \ref{Dfzp}, in the case $ p=+\infty$,   implies the following estimate:

\begin{equation}\label{Df0alpha}
 \lVert DP_{0,\alpha}[\varphi](z)\rVert\leq 
(\vert \alpha\vert+\alpha+2)\frac{\Gamma(\alpha +1)}{\Gamma^2\left(\frac{\alpha }{2}+1\right)}\cdot \frac{ \lVert \varphi\rVert_\infty }{1-\vert z\vert^2}, 
\quad \varphi \in L^\infty (\mathbb T), \quad \alpha>-1.
\end{equation}

Note that in Theorem \ref{Teorem LI} a stronger assumption is imposed on $\alpha$-harmonic function, namely it is assumed that it extends continuously to 
$\overline{\mathbb D}$.
Estimate \eqref{Df0alpha} is sharper than the estimate given in Theorem \ref{Teorem LI} from \cite{LIet} for any $ -1<\alpha<+\infty$. 

Also, the estimate \eqref{Df0alpha} gives better estimate on the order of growth of $ \lVert DP_{0,\alpha}[\varphi](z)\rVert $ as $ \vert z\vert\rightarrow 1 $ in the case $ -1<\alpha <0 $ than the estimate given in Lemma \ref{dva rada}.

For $ \alpha >0, $ by Legandre duplication formula 
 $ \Gamma(2z)=2^{2z-1}\pi^{-\frac 12}\Gamma(z)\Gamma(z+1/2), \Re z>0$ 
estimate \eqref{Df0alpha} gives
\begin{align}\label{Dfzalphainfty}  \lVert DP_{0,\alpha}[\varphi](z)\rVert\leq &
\frac{(\alpha+1)2^{\alpha+1}\pi^{-\frac 12}\Gamma\left(\frac{\alpha}{2}+\frac 12\right)}{\Gamma\left(\frac{\alpha}{2}+1\right)}\cdot\frac{ \lVert \varphi\rVert_\infty }{1-\vert z\vert^2}\nonumber\\
= & \frac{(\alpha+1)2^{\alpha+1} B\left(\frac{\alpha}{2}+\frac 12,\frac 12\right)}{\pi}\cdot\frac{ \lVert \varphi\rVert_\infty }{1-\vert z\vert^2}.
\end{align}

The function $ g(\alpha)= B\left(\frac{\alpha}{2}+\frac 12,\frac 12\right)/\pi $ is strictly decreasing for $  \alpha\in(0,+\infty) $  and $ g(0)=1.$ 
Therefore $ B\left(\frac{\alpha}{2}+\frac 12,\frac 12\right)<\pi $ and hence, the constant $ \pi^{-1}(\alpha+1)2^{\alpha+1} B\left(\frac{\alpha}{2}+\frac 12,\frac 12\right) $ in \eqref{Dfzalphainfty} is strictly smaller than the constant 
$ (\alpha+1)2^{\alpha+1} $ appearing in  Lemma \ref{dva rada}.

\begin{lemma}\label{polinom}
Let $ u_{\alpha,\beta} $ be as in \eqref{ualphabeta} for some $ \alpha,\beta\in\mathbb{C}$. Then for $ k\geq 0$ and $l\geq 0 $ we have
$$ \partial^k\overline{\partial}^l u_{\alpha,\beta}(z)= P_{k,l}\left(\frac{1}{1-\vert z\vert^2},\frac{1}{1-z},\frac{1}{1-\overline z}, f_1(z),\ldots, f_N(z)\right)u_{\alpha,\beta}(z), $$
where $ N=N_{k,l}, $ the functions $ f_1,\ldots, f_{N_{k,l}} $ are $ C^\infty $ on $\mathbb C$ and $ P_{k,l}$ is a polynomial whose total degree in $ (1-\vert z\vert^2)^{-1}, (1-z)^{-1},(1-\overline z)^{-1} $ is equal to $ k+l$. In other words, 
$$  P_{k,l}=P_{k,l}\left(\frac{1}{1-\vert z\vert^2},\frac{1}{1-z},\frac{1}{1-\overline z}\right) $$ 
is polynomial over an algebra $ C^\infty(\mathbb C) $ and $ \deg P_{k,l}=k+l$. 
\end{lemma}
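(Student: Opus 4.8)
The plan is to prove the statement by induction on $k+l$, peeling off one derivative at a time. The base case $k=l=0$ is trivial: $P_{0,0}$ is the constant polynomial $1$ (degree $0$) and there are no auxiliary functions needed. For the inductive step I would assume the representation holds for some pair $(k,l)$ and deduce it for $(k+1,l)$; the case $(k,l+1)$ is symmetric (indeed, conjugating the roles of $z$ and $\overline z$ and of $\alpha$ and $\beta$), so it suffices to treat one of them. Thus suppose
$$
\partial^k\overline\partial^l u_{\alpha,\beta}(z) = P_{k,l}\!\left(\tfrac{1}{1-\vert z\vert^2},\tfrac{1}{1-z},\tfrac{1}{1-\overline z},f_1,\ldots,f_N\right) u_{\alpha,\beta}(z),
$$
and apply $\partial_z$ to both sides using the product rule.

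\textbf{Key steps.} First I record the effect of $\partial_z$ on each of the three basic rational building blocks: $\partial_z (1-\vert z\vert^2)^{-1} = \overline z\,(1-\vert z\vert^2)^{-2}$, $\partial_z (1-z)^{-1} = (1-z)^{-2}$, and $\partial_z (1-\overline z)^{-1} = 0$. Note the crucial feature: differentiating $(1-\vert z\vert^2)^{-1}$ raises its power by exactly one and introduces a factor $\overline z$, which is $C^\infty$ on $\mathbb C$ and can be absorbed into the list $f_1,\ldots,f_N$ (or simply added to it as a new $f_{N+1}(z)=\overline z$); differentiating $(1-z)^{-1}$ raises its power by one with no extra factor; and $(1-\overline z)^{-1}$ is killed. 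In every case the total degree in the three variables $\big((1-\vert z\vert^2)^{-1},(1-z)^{-1},(1-\overline z)^{-1}\big)$ of any monomial goes up by at most one. Hence $\partial_z P_{k,l}$, expanded via the chain rule over the $C^\infty$-algebra, is again a polynomial in these three variables, of total degree at most $k+l+1$, with $C^\infty$ coefficients. Second, from Lemma \ref{partialu} we have $\partial_z u_{\alpha,\beta} = Q\, u_{\alpha,\beta}$ where $Q = -(\alpha+\beta+1)\,\overline z\,(1-\vert z\vert^2)^{-1} + (\alpha+1)(1-z)^{-1}$ is itself a polynomial of total degree exactly $1$ in the three basic variables (again with the $C^\infty$ factor $\overline z$ absorbed). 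Therefore
$$
\partial^{k+1}\overline\partial^l u_{\alpha,\beta} = \big(\partial_z P_{k,l} + P_{k,l}\cdot Q\big)\, u_{\alpha,\beta} =: P_{k+1,l}\, u_{\alpha,\beta},
$$
and $P_{k+1,l}$ is a polynomial over $C^\infty(\mathbb C)$ in $\big((1-\vert z\vert^2)^{-1},(1-z)^{-1},(1-\overline z)^{-1}\big)$. Its total degree is at most $\max\{(k+l)+1,\ (k+l)+1\} = k+l+1$, and I would check it is \emph{exactly} $k+l+1$ by tracking the leading term: the highest-degree part of $P_{k,l}$, multiplied by the degree-one term $(\alpha+1)(1-z)^{-1}$ of $Q$, produces a genuine degree-$(k+l+1)$ monomial in $(1-z)^{-1}$ that cannot be cancelled (one can see inductively that the coefficient of the pure power $(1-z)^{-(k+l)}$ in $P_{k,l}$ is the nonzero constant $(\alpha+1)_k(\overline\beta+1)_l$-type product, or more simply invoke the already-established special case in the Lemma preceding this one, which gives the leading behaviour $(\alpha+1)_k(1-z)^{-k}$ when $l=0$). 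This closes the induction.

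\textbf{Main obstacle.} The computations themselves are routine; the only genuine point requiring care is the bookkeeping of the degree, namely verifying that the total degree does not \emph{exceed} $k+l$ (it is tempting to worry that $\partial_z$ of a product of several factors $(1-\vert z\vert^2)^{-1}$ could somehow compound) and that it does not \emph{drop below} $k+l$. The first is handled by the observation above that each application of $\partial_z$ to a single basic factor raises the degree by at most one and the Leibniz rule distributes the derivative over the factors one at a time, so a degree-$d$ monomial yields a sum of degree-$(d{+}1)$ monomials — combined with multiplication by the degree-$1$ symbol $Q$, which also raises degree by exactly one, the total stays $\le k+l+1$. The second — sharpness of the degree — is the part I would be most careful about; the cleanest route is to isolate the coefficient of the extremal monomial $(1-z)^{-(k+l)}$ (for the mixed derivative, the monomial $(1-z)^{-k}(1-\overline z)^{-l}$) and show by the same induction that it equals $(\alpha+1)_k\,(\beta+1)_l$, which is nonzero since $\alpha,\beta\notin\mathbb Z^-$, so no cancellation can occur. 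One should also note that the auxiliary $C^\infty$ functions $f_j$ proliferate with each differentiation, but their number $N_{k,l}$ is finite at each stage and that is all the statement claims; no uniform control is needed.
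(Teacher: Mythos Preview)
Your proposal is correct and follows essentially the same approach as the paper: induction on $k+l$, applying $\partial$ (or $\overline\partial$) to the inductive representation via the chain rule on $P_{k,l}$ and the product rule together with Lemma \ref{partialu}. You are in fact more careful than the paper about verifying that the degree is \emph{exactly} $k+l$ (the paper simply asserts this), and your tracking of the extremal monomial $(1-z)^{-k}(1-\overline z)^{-l}$ with coefficient $(\alpha+1)_k(\beta+1)_l$ is a clean way to secure this under the paper's standing assumption $\alpha,\beta\notin\mathbb Z^-$.
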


\begin{proof}
We proceed by induction on $n=k+l$. The case $n = 1$  is Lemma \ref{partialu} from \cite{KO}. Now
 let $n > 1$ and assume that Lemma holds for $n-1.$ Let
$ u=(1-\vert z\vert^2)^{-1},v=(1-z)^{-1},w=(1-\overline z)^{-1}$. Then
\begin{align*}
\partial^{k+1}\overline{\partial}^l u_{\alpha,\beta}(z)=& 
\partial \left(P_{k,l}\left(\frac{1}{1-\vert z\vert^2},\frac{1}{1-z},\frac{1}{1-\overline z},f_1(z),\ldots, f_N(z)\right)u_{\alpha,\beta}(z)\right)\\
=& \left(\frac{\partial P_{k,l}}{\partial u}\frac{\overline{z}}{(1-\vert z\vert^2)^2}+\frac{\partial P_{k,l}}{\partial v}\frac{1}{(1-z)^2}+\sum_{i=1}^N\frac{\partial P_{k,l}}{\partial f_i}\frac{\partial f_i}{\partial z}\right.
\\+&\left.P_{k,l}\frac{\alpha+1}{1-z}-P_{k,l}\frac{(\alpha+\beta+1)\overline z}{1-\vert z\vert^2}\right) u_{\alpha,\beta}(z)\\
=& P_{k+1,l}u_{\alpha,\beta}(z)
\end{align*}
and similarly
\begin{align*}
\partial^{k}\overline{\partial}^{l+1} u_{\alpha,\beta}(z)=& 
\overline \partial \left(P_{k,l}\left(\frac{1}{1-\vert z\vert^2},\frac{1}{1-z},\frac{1}{1-\overline z},f_1(z),\ldots, f_N(z)\right)u_{\alpha,\beta}(z)\right)\\
=& \left(\frac{\partial P_{k,l}}{\partial u}\frac{{z}}{(1-\vert z\vert^2)^2}+\frac{\partial P_{k,l}}{\partial w}\frac{1}{(1-\overline z)^2}+\sum_{i=1}^N\frac{\partial P_{k,l}}{\partial f_i}\frac{\partial f_i}{\partial \overline z}\right.
\\+&\left.P_{k,l}\frac{\beta+1}{1-\overline z}-P_{k,l}\frac{(\alpha+\beta+1) z}{1-\vert z\vert^2}\right) u_{\alpha,\beta}(z)\\
=&  P_{k,l+1}u_{\alpha,\beta}(z).
\end{align*}
Clearly $ P_{k+1,l}, P_{k,l+1}$ are the polynomials whose total degree in $ (1-\vert z\vert^2)^{-1}, (1-z)^{-1},(1-\overline z)^{-1} $ is equal to $ k+l+1$.
\end{proof}

\begin{remark}
Ako je $ k=l$ onda je $ P_{k,l}=P_{k,l}\left(\frac{1}{1-\vert z\vert^2},\frac{1}{1-z},\frac{1}{1-\overline z}\right) $ polinom po tim promjenljivim sa konstantnim koeficijentima. (nema ni $ z $ ni $ \overline z$.)
\end{remark}

\begin{proposition}\label{visiuab}
Let $ u_{\alpha,\beta} $ be as in \eqref{ualphabeta} for some $ \alpha,\beta\in\mathbb{C}$. Then
\begin{equation}\label{estkluab}
\left \vert \partial^k\overline{\partial}^l u_{\alpha,\beta}(z)\right\vert \leq C_{\alpha,\beta,k,l}\frac{\vert u_{\alpha,\beta}(z)\vert}{(1-\vert z\vert^2)^{k+l}}\quad z\in \mathbb{D}.
\end{equation}
\end{proposition}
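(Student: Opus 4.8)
The plan is to deduce the estimate directly from Lemma~\ref{polinom}, after which only an elementary comparison of $|1-z|$ and $|1-\overline z|$ with $1-|z|^2$ on the disc remains. By Lemma~\ref{polinom} one writes
$$
\partial^k\overline\partial^l u_{\alpha,\beta}(z)=P_{k,l}\!\left(\frac{1}{1-|z|^2},\frac{1}{1-z},\frac{1}{1-\overline z},f_1(z),\dots,f_N(z)\right)u_{\alpha,\beta}(z),
$$
with $N=N_{k,l}$, each $f_j\in C^\infty(\mathbb C)$, and $P_{k,l}$ --- regarded as a polynomial in the three variables $(1-|z|^2)^{-1}$, $(1-z)^{-1}$, $(1-\overline z)^{-1}$ over the algebra $C^\infty(\mathbb C)$ --- of total degree $k+l$. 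Thus $P_{k,l}$ is a finite sum of terms $g(z)\,(1-|z|^2)^{-a}(1-z)^{-b}(1-\overline z)^{-c}$ with $a+b+c\le k+l$ and $g\in C^\infty(\mathbb C)$, where the number of terms and the functions $g$ depend only on $\alpha,\beta,k,l$.

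Next I would record the two elementary inputs. For $z\in\mathbb D$ one has $1-|z|^2=(1-|z|)(1+|z|)\le 2(1-|z|)$ and $|1-z|=|1-\overline z|\ge 1-|z|$, hence
$$
\frac{1}{|1-z|}\le\frac{2}{1-|z|^2}\qquad\text{and}\qquad\frac{1}{|1-\overline z|}\le\frac{2}{1-|z|^2},\qquad z\in\mathbb D.
$$
Moreover each $f_j$ is continuous on the compact set $\overline{\mathbb D}$ and therefore bounded there, so every coefficient $g$ appearing in $P_{k,l}$ satisfies $|g(z)|\le M$ on $\overline{\mathbb D}$ for some $M=M(\alpha,\beta,k,l)$.

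Putting these together, I would bound a typical term of $P_{k,l}$ in modulus on $\mathbb D$ by $M\,2^{b+c}(1-|z|^2)^{-(a+b+c)}\le M\,2^{k+l}(1-|z|^2)^{-(k+l)}$, the last inequality using $0<1-|z|^2\le 1$ together with $a+b+c\le k+l$. Summing over the finitely many terms gives $|P_{k,l}(\dots)|\le C_{\alpha,\beta,k,l}(1-|z|^2)^{-(k+l)}$, and multiplying by $|u_{\alpha,\beta}(z)|$ yields \eqref{estkluab}. I do not anticipate a genuine obstacle: all the real work sits in Lemma~\ref{polinom}, in particular the sharp degree count $\deg P_{k,l}=k+l$, and what is left is merely that the two ``dangerous'' factors $(1-z)^{-1}$, $(1-\overline z)^{-1}$ are each controlled by $(1-|z|^2)^{-1}$ and that the smooth coefficients cause no trouble on $\overline{\mathbb D}$. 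The only point requiring a little care is to note that the number of monomials and their coefficients are determined by $\alpha,\beta,k,l$ alone, so that the final constant indeed depends only on these parameters.
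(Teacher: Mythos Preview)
Your proof is correct and follows the same route as the paper: the paper's own proof is a single line invoking Lemma~\ref{polinom} together with the elementary inequality $\tfrac12(1-|z|^2)\le 1-|z|\le |1-z|=|1-\overline z|$ on $\mathbb D$, which is exactly the comparison you carry out. You have simply unpacked the details (boundedness of the smooth coefficients on $\overline{\mathbb D}$, the $2^{b+c}$ factor, and the use of $0<1-|z|^2\le 1$ to absorb lower-order terms) that the paper leaves implicit.
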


\begin{proof}
The proposition follows from  Lemma \ref{polinom} and the fact that $ \frac 12(1-\vert z\vert^2)\leq 1-\vert z \vert \leq \vert 1-z\vert=\vert 1-\overline z\vert $ for all $ z\in \mathbb{D}$.
\end{proof}
The next theorem extends Theorem \ref{Dfzp} to higher order derivatives. 

\begin{theorem}
Let $ \varphi\in L^p(\mathbb{T}), 1\leq p\leq +\infty$. Then
\begin{equation}\label{estkluab}
\left \vert \partial^k\overline{\partial}^l u(z)\right\vert \leq C_{\alpha,\beta,k,l,p}\frac{\lVert \varphi\rVert_p}{(1-\vert z\vert^2)^{k+l+\frac 1p}}, \qquad  u=P_{\alpha,\beta}[\varphi].
\end{equation}
\end{theorem}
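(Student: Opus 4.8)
The plan is to mimic the proof of Theorem \ref{Dfzp}, differentiating the Poisson integral $u = P_{\alpha,\beta}[\varphi]$ under the integral sign, but now invoking the higher–order pointwise estimate of Proposition \ref{visiuab} in place of the first–order derivative formula of Lemma \ref{partialu}.

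First I would justify differentiation under the integral sign: for $z$ ranging over a fixed compact subset of $\mathbb D$ and $\zeta \in \mathbb T$, the function $u_{\alpha,\beta}$ being $C^\infty$ on $\mathbb D$, all the $z$-derivatives of $\zeta \mapsto u_{\alpha,\beta}(z\overline\zeta)$ are bounded uniformly in $\zeta$, so that
$$
\partial^k\overline\partial^l u(z) = c_{\alpha,\beta} \int_{\mathbb T} \partial_z^k\partial_{\overline z}^l\bigl[u_{\alpha,\beta}(z\overline\zeta)\bigr]\,\varphi(\zeta)\,dm(\zeta), \qquad z \in \mathbb D .
$$
Next comes a chain-rule computation: writing $w = z\overline\zeta$ and noting $\partial_z w = \overline\zeta$, $\partial_{\overline z}\overline w = \zeta$, $\partial_{\overline z} w = \partial_z\overline w = 0$, one obtains
$$
\partial_z^k\partial_{\overline z}^l\bigl[u_{\alpha,\beta}(z\overline\zeta)\bigr] = \overline\zeta^{\,k}\zeta^{\,l}\,\bigl(\partial^k\overline\partial^l u_{\alpha,\beta}\bigr)(z\overline\zeta), \qquad \zeta \in \mathbb T .
$$
Since $|\zeta| = 1$ the prefactor has modulus $1$ and $|z\overline\zeta| = |z|$, so Proposition \ref{visiuab} gives the pointwise bound
$$
\bigl\vert \partial^k\overline\partial^l u(z)\bigr\vert \leq \vert c_{\alpha,\beta}\vert\, C_{\alpha,\beta,k,l}\,\frac{1}{(1-\vert z\vert^2)^{k+l}}\int_{\mathbb T}\vert u_{\alpha,\beta}(z\overline\zeta)\vert\,\vert\varphi(\zeta)\vert\,dm(\zeta).
$$

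It remains to estimate the integral, and here the argument splits exactly as in Theorem \ref{Dfzp}. For $1 < p \leq +\infty$, H\"older's inequality with the conjugate exponent $q$ together with Lemma \ref{estuabp} gives
$$
\int_{\mathbb T}\vert u_{\alpha,\beta}(z\overline\zeta)\vert\,\vert\varphi(\zeta)\vert\,dm(\zeta) \leq C_{\alpha,\beta,q}\,(1-\vert z\vert^2)^{\frac{1-q}{q}}\,\lVert\varphi\rVert_p = C_{\alpha,\beta,q}\,(1-\vert z\vert^2)^{-\frac1p}\,\lVert\varphi\rVert_p,
$$
using $\frac{1-q}{q} = \frac1q - 1 = -\frac1p$. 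For $p = 1$ one instead uses the crude bound $\vert u_{\alpha,\beta}(z\overline\zeta)\vert \leq e^{\frac\pi2\vert\Im\alpha - \Im\beta\vert}\,2^{\Re\alpha+\Re\beta+2}(1-\vert z\vert^2)^{-1}$ from \eqref{procenaualphabeta}, which makes the integral at most $C_{\alpha,\beta}(1-\vert z\vert^2)^{-1}\lVert\varphi\rVert_1$. In both cases the extra power of $(1-\vert z\vert^2)^{-1}$ is $\frac1p$, and combining with the previous display yields the claimed estimate with $C_{\alpha,\beta,k,l,p} = \vert c_{\alpha,\beta}\vert\, C_{\alpha,\beta,k,l}\, C_{\alpha,\beta,p}$.

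I do not expect a genuine obstacle here: the only points needing care are the validity of differentiating under the integral sign and the chain-rule identity of the second step, both of which are routine. All the analytic content is already packaged in Proposition \ref{visiuab} (which rests on Lemma \ref{polinom}) and in Lemma \ref{estuabp}.
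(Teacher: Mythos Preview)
Your proposal is correct and follows essentially the same route as the paper: differentiate under the integral sign, apply Proposition \ref{visiuab} to reduce to $\int_{\mathbb T}|u_{\alpha,\beta}(z\overline\zeta)||\varphi(\zeta)|\,dm(\zeta)$, then split into the case $p=1$ (pointwise bound \eqref{procenaualphabeta}) and $1<p\leq\infty$ (H\"older plus Lemma \ref{estuabp}). The only cosmetic difference is that you spell out the chain-rule identity $\partial_z^k\partial_{\overline z}^l[u_{\alpha,\beta}(z\overline\zeta)] = \overline\zeta^{\,k}\zeta^{\,l}(\partial^k\overline\partial^l u_{\alpha,\beta})(z\overline\zeta)$ and handle $p=\infty$ together with $1<p<\infty$, whereas the paper treats $p=\infty$ separately via \eqref{L1est}; these are equivalent.
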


\begin{proof}
By Proposition \ref{visiuab} we have  

$$
\left \vert \partial^k\overline{\partial}^l u(z)\right\vert =
\left \vert c_{\alpha,\beta}\int_{\mathbb{T}}\partial^k\overline{\partial}^l u_{\alpha,\beta}(z\overline \zeta)\varphi(\zeta)dm(\zeta)\right\vert
\leq  C_{\alpha,\beta,k,l}\int_{\mathbb{T}}\frac{\vert u_{\alpha,\beta}(z\overline \zeta)\vert}{(1-\vert z\vert^2)^{k+l}}\vert \varphi(\zeta)\vert dm(\zeta), \qquad z \in \mathbb D.
$$

If $ p=+\infty$, then
 
\begin{align*}
\int_{\mathbb{T}}\frac{\vert u_{\alpha,\beta}(z\overline \zeta)\vert}{(1-\vert z\vert^2)^{k+l}}\vert \varphi(\zeta)\vert dm(\zeta)\leq & 
\frac{\lVert \varphi\rVert_\infty}{(1-\vert z\vert^2)^{k+l}}
\int_{\mathbb{T}}\vert u_{\alpha,\beta}(z\overline \zeta)\vert dm(\zeta) \\
\leq & e^{\frac{\pi}{2}
\vert \Im  \alpha-\Im \beta\vert}\frac{\Gamma(\Re \alpha + \Re \beta+1)}{\Gamma^2\left(\frac{\Re \alpha + \Re \beta+2}{2}\right)}\frac{\lVert \varphi\rVert_\infty}{(1-\vert z\vert^2)^{k+l}}, 
\end{align*}
and, if $ p=1, $ using estimate \eqref{procenaualphabeta}, we have
\begin{align*}
\int_{\mathbb{T}}\frac{\vert u_{\alpha,\beta}(z\overline \zeta)\vert}{(1-\vert z\vert^2)^{k+l}}\vert \varphi(\zeta)\vert dm(\zeta)\leq & \frac{C_{\alpha,\beta}}{(1-\vert z\vert^2)^{k+l+1}}\int_{\mathbb{T}}\vert \varphi(\zeta)\vert dm(\zeta)=
\frac{C_{\alpha,\beta}}{(1-\vert z\vert^2)^{k+l+1}}\lVert \varphi\rVert_{L^1(\mathbb{T})}.
\end{align*}
For $ 1<p<\infty$ using H$\ddot{o} $lder inequality and Lemma \ref{estuabp}, we obtain

\begin{align*}
\int_{\mathbb{T}}\frac{\vert u_{\alpha,\beta}(z\overline \zeta)\vert}{(1-\vert z\vert^2)^{k+l}}\vert \varphi(\zeta)\vert dm(\zeta)\leq &
\frac{\lVert \varphi\rVert_{L^p(\mathbb{T})}}{(1-\vert z\vert^2)^{k+l}}
\left(\int_{\mathbb{T}}\vert u_{\alpha,\beta}(z\overline \zeta)\vert^q dm(\zeta) \right)^{\frac{1}{q}}\\
 \leq &\frac{C_{\alpha,\beta,p}\lVert \varphi\rVert_{L^p(\mathbb{T})}}{(1-\vert z\vert^2)^{k+l}} \left(1-\vert z\vert^2\right)^{\frac{1}{q}-1}\\
=& \frac{C_{\alpha,\beta,p}\lVert \varphi\rVert_{L^p(\mathbb{T})}}{(1-\vert z\vert^2)^{k+l+\frac{1}{p}}}
\end{align*}

where $ C_{\alpha,\beta,p}=e^{\frac{\pi}{2}
\vert \Im  \alpha-\Im \beta\vert}\left(\frac{\Gamma(q(\Re \alpha + \Re \beta+2)-1)}{\Gamma^2\left(\frac{q(\Re \alpha + \Re \beta+2)}{2}\right)}\right)^{1-\frac{1}{p}}$.
\end{proof}

\end{document}